\documentclass[11pt]{amsart}
\usepackage[utf8]{inputenc}

\usepackage{amssymb,amsthm,amsfonts,enumerate, epsfig,epstopdf,caption, graphicx, xcolor}
\usepackage{amsmath}
\usepackage{subcaption}
\usepackage{hyperref}

 \textwidth 6.00in \textheight 8.00in \oddsidemargin 0.25in \evensidemargin 0.25in

\newcommand{\lastcfrac}[2]{%
  \vphantom{\cfrac{#1}{#2}}%
  \raisebox{\dimexpr1ex-\height}{%
    $\displaystyle \raisebox{.5\height}{$\ddots$}+\cfrac{#1}{#2} $%
  }%
}

\title{Constructions of $q$-hyperbolic knots}
\author{Efstratia Kalfagianni and Joseph M. Melby}
\thanks{ This research is   partially supported by the NSF  grants  DMS-2004155 and DMS-2304033.}

\address[]{Department of Mathematics, Michigan State University, East Lansing, MI, 48824, USA}
\email[]{kalfagia@msu.edu}

\address[]{Department of Mathematics, Michigan State University, East Lansing, MI, 48824, USA}
\email[]{melbyjos@msu.edu}

\begin{document}

\begin{abstract}
We use Dehn surgery methods to construct infinite families of hyperbolic knots in the 3-sphere satisfying  a
weak form of the Turaev--Viro invariants volume conjecture. The results have applications to a
conjecture of Andersen, Masbaum, and Ueno about quantum representations of surface mapping class
groups. We obtain an explicit family of pseudo-Anosov mapping classes acting on surfaces of any genus and with one
boundary component that satisfy the conjecture.
\end{abstract}

\maketitle

\newtheorem{innercustomgeneric}{\customgenericname}
\providecommand{\customgenericname}{}
\newcommand{\newcustomtheorem}[2]{%
  \newenvironment{#1}[1]
  {%
   \renewcommand\customgenericname{#2}%
   \renewcommand\theinnercustomgeneric{##1}%
   \innercustomgeneric
  } {\endinnercustomgeneric} }

\newcustomtheorem{customthm}{Theorem}
\newcustomtheorem{customlemma}{Lemma}
\newcustomtheorem{customprop}{Proposition}
\newcustomtheorem{customconjecture}{Conjecture}
\newcustomtheorem{customcor}{Corollary}
\newcommand{\Q}{{\mathbb{Q}}} \newcommand{\R}{{\mathbb{R}}} \newcommand{\Z}{{\mathbb{Z}}}
\newcommand{\N}{{\mathbb{N}}} \newcommand{\C}{{\mathbb{C}}}

\theoremstyle{plain}
\newtheorem*{ack*}{Acknowledgements}
\newtheorem{thm}{Theorem}[section]
\newtheorem{lem}[thm]{Lemma}
\newtheorem{prop}[thm]{Proposition}
\newtheorem{defin}[thm]{Definition}
\newtheorem{cor}[thm]{Corollary}
\newtheorem{predefinition}[thm]{Definition}
\newtheorem{conjecture}[thm]{Conjecture}
\newtheorem{preremark}[thm]{Remark}
\newenvironment{remark}%
 {\begin{preremark}\upshape}{\end{preremark}} \newenvironment{definition}%
  {\begin{predefinition}\upshape}{\end{predefinition}}

\newtheorem{ex}[thm]{Example}
\newtheorem{ques}[thm]{Question}

\newcommand{\effie}[1]{{\color{blue}#1}}

\newcommand{\joe}[1]{{\color{red}#1}}

\section{Introduction}
The Turaev--Viro invariants of a compact $3$-manifold $M$ are a family of $\mathbb{R}$-valued
homeomorphism invariants $TV_r(M; q)$ parameterized by an integer $r\geq 3$ depending on a $2r$-th
root of unity $q$. They  were originally defined in terms of triangulations of compact $3$-manifolds
and were later related to skein-theoretic quantum invariants such as the Reshetikhin--Turaev and
colored Jones invariants \cite{TVCompact, RobertsSkein, growth6j, colJvolDKY}. In this paper, we are
primarily concerned with the geometric data recovered from the asymptotic behavior of the
Turaev--Viro invariants at the root $q=e^{\frac{2\pi i}{r}}$.

For a compact 3-manifold $M$, that is closed or has toroidal boundary, let
\begin{align*}
\textit{lTV}(M) := \liminf_{r\rightarrow \infty, \text{ } r \text{ odd}} \frac{2\pi}{r} &\log \left|\text{TV}_r \left(M; q = e^{\frac{2\pi i}{r}}\right)\right|,
\end{align*}

A 3-manifold $M$ with  $\textit{lTV}(M)>0$ is called  $q$\textit{-hyperbolic}. We will say that a
knot $K$ is $q$-hyperbolic if the complement $M_K:=\overline{S^3\setminus n(K)}$ is $q$-hyperbolic,
where  $n(K)$ is a tubular neighborhood of $K$. 

Chen and Yang \cite{chenyang2018vol} conjectured that if $M$ is hyperbolic, with volume ${\rm
vol}(M)$, then $\textit{lTV}(M)={\rm vol}(M)>0$. A related weaker conjecture, which was stated and
studied by Detcherry and Kalfagianni \cite{DKAdvances,detcherryKalfagianni2019gromov, DKIndiana}, is
the following:

\begin{conjecture} \label{conj: expgrowthconj} \textup{({Exponential Growth Conjecture})}
\textit{Let $M$ be a compact, oriented $3$-manifold with empty or toroidal boundary with Gromov norm
$||M||$. Then, $M$ is $q$-hyperbolic if and only if $||M||>0$.}
\end{conjecture}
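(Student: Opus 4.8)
The plan is to treat the two implications of the biconditional separately, since they have very different flavors: one is a soft upper bound on the growth of $TV_r$, while the other carries the full geometric content and is, in complete generality, essentially the hard half of the volume conjecture.

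First I would address the implication that $q$-hyperbolicity forces $\|M\|>0$, arguing by contrapositive. By additivity of the Gromov norm over the JSJ decomposition (Gromov--Thurston), $\|M\|=0$ precisely when every geometric piece of $M$ is Seifert fibered, i.e. $M$ is a graph manifold, so the goal becomes showing that such $M$ have $lTV(M)=0$. The naive state-sum estimate---bounding $|TV_r(M)|$ by the (only polynomially large) number of admissible colorings times the product of $6j$-symbols, each of exponential growth rate at most $v_8$ per tetrahedron \cite{growth6j}---only yields $lTV(M)\le v_8\,T$ for a triangulation with $T$ tetrahedra, which is too weak to vanish for graph manifolds. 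The real work is therefore to exploit the Seifert-fibered structure directly: compute or bound $TV_r$ of each Seifert piece, where the invariants reduce to finite sums of quantum integers and Gauss-type sums that are only polynomially large in $r$, and then control the behavior under the toral gluings of the decomposition so that the growth stays sub-exponential. Partial progress toward this direction already appears in \cite{detcherryKalfagianni2019gromov}.

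The substantive implication is $\|M\|>0 \Rightarrow M$ is $q$-hyperbolic, which I would approach in two stages. First, reduce to producing a single source of exponential growth: since $\|M\|>0$ means $M$ contains at least one hyperbolic JSJ piece $N$, it suffices to show that $q$-hyperbolicity of $N$ propagates to $M$ under the toral gluings, for which I would seek monotonicity or sub-additivity inequalities for $lTV$ under gluing along tori and under Dehn filling. Second, one must establish $lTV(N)>0$ for the hyperbolic piece itself. This second step is the crux and the main obstacle: in full generality it is exactly the open lower-bound half of the Chen--Yang conjecture \cite{chenyang2018vol}, and no current technique reaches it for an arbitrary hyperbolic $N$. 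I therefore expect the realistic route---and the one the constructive Dehn-surgery methods of this paper pursue---to bypass the blanket statement: begin from building blocks whose $q$-hyperbolicity is already known, such as the figure-eight, Whitehead, and Borromean ring complements and, more flexibly, fundamental shadow link complements \cite{colJvolDKY}, and realize a rich family of target manifolds as Dehn fillings of these. The key technical ingredient making this work is a Dehn-filling estimate guaranteeing that the exponential growth of the Turaev--Viro invariants of the link complement survives the filling with an explicitly bounded loss, so that $lTV$ of the filled manifold stays positive. This strategy yields the conjecture for large, surgery-accessible families of knots and mapping classes, even while the general reverse implication remains out of reach.
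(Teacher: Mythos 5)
This statement is a conjecture, and the paper offers no proof of it; there is nothing to compare your argument against line by line. What the paper does say is that the implication ($M$ $q$-hyperbolic $\Rightarrow$ $\|M\|>0$) is already a theorem, following from the main result of \cite{detcherryKalfagianni2019gromov}, while the converse is open and is the direction whose verification for specific families is the content of the paper. Your proposal gets this division of labor essentially right, with one miscalibration: you present the first implication as still requiring substantial work on Seifert pieces and gluings, and describe \cite{detcherryKalfagianni2019gromov} as only ``partial progress,'' whereas that reference in fact establishes an upper bound of the form $lTV(M)\leq C\,\|M\|$ for all compact oriented $M$ with empty or toroidal boundary, which settles that direction completely (if $\|M\|=0$ then $lTV(M)\leq 0$, so $M$ is not $q$-hyperbolic). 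Your treatment of the hard direction is accurate and matches the paper's strategy: it is open in general, and the realistic route is to start from manifolds with known exponential growth (here, hyperbolic Dehn fillings of the figure-eight complement, via \cite{OhtsukiFig8, wongYangF8}) and propagate $q$-hyperbolicity backwards through Dehn filling using the monotonicity $lTV(M')\leq lTV(M)$ of Theorem \ref{thm: lTV bounded}. So your proposal is not a proof of the conjecture --- no such proof exists --- but it is a correct assessment of its status and of the approach the paper actually takes to produce examples satisfying it.
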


By the geometrization theorem, a compact, oriented $3$-manifold M with empty or toroidal  boundary
can be cut along a canonical collection of tori into pieces that are either Seifert fibered
manifolds or hyperbolic. Moreover, $M$ has positive Gromov norm precisely when this decomposition
contains hyperbolic pieces. In this language, Conjecture \ref{conj: expgrowthconj} asserts that $M$
is $q$-hyperbolic if and only if its  geometric decomposition contains hyperbolic manifolds. One
direction of the conjecture, namely that if $M$ is  $q$-hyperbolic, then  $||M||>0$,  
follows from the main result of \cite{detcherryKalfagianni2019gromov}. The other direction was shown
in \cite{DKAdvances} to imply a conjecture of Andersen, Masbaum, and Ueno \cite{AMU} on the
geometric content of quantum representations of mapping class groups of surfaces.

The purpose of this paper is to give constructions of  the first  infinite families of hyperbolic
knots in the 3-sphere that are shown to be $q$-hyperbolic. The only knot complements in the 3-sphere
for which the asymptotic behavior of the Turaev--Viro invariants has been explicitly understood is
the figure-eight and all of its 2-cables \cite{colJvolDKY, 2-cable}. The only hyperbolic knot among
these is the figure-eight knot.  
On the other hand, the volume conjecture  of \cite{chenyang2018vol} has been proved for all
hyperbolic 3-manifolds that are obtained by Dehn filling the figure eight knot complement
\cite{OhtsukiFig8, wongYangF8}. Our constructions combine  these results, with a result of
\cite{detcherryKalfagianni2019gromov} about the behavior of the Turaev--Viro invariants under
Dehn-filling, and with  several Dehn surgery techniques. Our results also have new applications to
the conjecture of \cite{AMU}.

\subsection{Main results}
Given a knot $K$, let  $\mu, \lambda$ denote a set of canonical generators for
$H_1(\partial(n(K)))$. For a simple closed curve $s$ on $\partial(n(K))$, we denote by
$[s]=p\mu+q\lambda$ its class in $H_1(\partial(n(K)))$, where $p,q$ are relatively prime integers.
Recall that $s$ is completely determined, up to isotopy, by the fraction $p/q\in{\mathbb Q}\cup
\{\infty\}$. We will use $M_K({p}/{q})$ to denote the 3-manifold obtained by Dehn-filling $M_K$
along the slope $s$ determined by ${p}/{q}$.

For $m,n \in \mathbb{Z}$, let the \emph{double twist knot} $D(m,n)$ with $m$ vertical half-twists
  and $n$ horizontal half-twists be as in Figure \ref{fig:doubletwistknot}. For example, $D(2, -2)$
  is the figure-eight knot and $D(2, 2)$ is the left-handed trefoil.  With the exception of the
  unknot and the two trefoils,  the double twist knots are hyperbolic. We show the following:

  \begin{figure}
      \centering
      \includegraphics[scale=0.3]{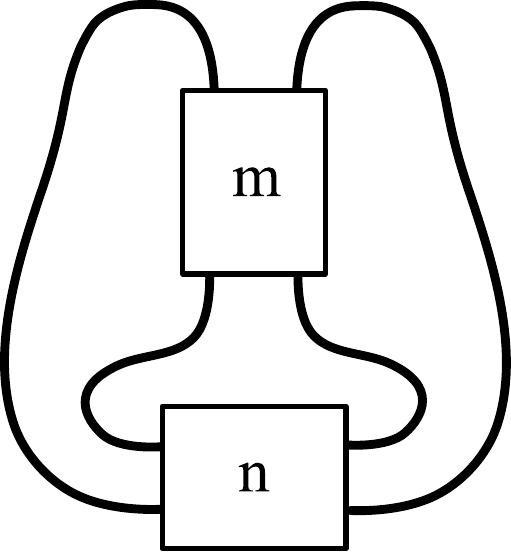}
      \caption{A double twist knot $D(m,n)$ diagram with $m$ vertical half-twists and $n$ horizontal
      half-twists.}
      \label{fig:doubletwistknot}
  \end{figure}

\begin{thm} \label{thm: doubleknotintro} For any integer $n\neq 0, -1$, the following are true:
\begin{enumerate}[(a)]
 \item The knots  $D_n:=D(2n, -3)$ and $D'_n:=D(2n, -2)$ are $q$-hyperbolic. \item The 3-manifolds
  $M_n:=M_{D_n}(4n+1)$ and $M'_n:=M_{D'_n}(1)$ are hyperbolic and $q$-hyperbolic. 

   \item We have
  $$\textit{lTV}(M_{D_n})\geq {\rm vol}(M_n), \ \ {\rm and} \ \  \textit{lTV}(M_{D'_n})\geq {\rm
  vol}(M'_n).$$
  
  \end{enumerate}
\end{thm}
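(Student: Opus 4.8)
The plan is to deduce everything from two results quoted in the introduction: the verification of the Chen--Yang volume conjecture for hyperbolic Dehn fillings of the figure-eight complement \cite{OhtsukiFig8, wongYangF8}, and the fact, proved in \cite{detcherryKalfagianni2019gromov}, that $\textit{lTV}$ does not increase under Dehn filling. I will use the latter in the form: if $N$ is obtained from a compact manifold $M$ with toroidal boundary by Dehn filling one or more boundary tori, then $\textit{lTV}(M)\geq \textit{lTV}(N)$. Since $M_n=M_{D_n}(4n+1)$ and $M'_n=M_{D'_n}(1)$ are, by construction, Dehn fillings of the knot complements $M_{D_n}$ and $M_{D'_n}$, this immediately gives
\[
\textit{lTV}(M_{D_n})\geq \textit{lTV}(M_n),\qquad \textit{lTV}(M_{D'_n})\geq \textit{lTV}(M'_n).
\]
Hence parts (a) and (c) will follow once part (b) is established together with the equalities $\textit{lTV}(M_n)={\rm vol}(M_n)$ and $\textit{lTV}(M'_n)={\rm vol}(M'_n)$; and those equalities will follow from \cite{OhtsukiFig8, wongYangF8} as soon as $M_n$ and $M'_n$ are identified as hyperbolic fillings of $M_{4_1}$.

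The central step, which I expect to be the main obstacle, is a surgery-theoretic identification of $M_n$ and $M'_n$ as Dehn fillings of the figure-eight complement $M_{4_1}$. Here I would use the twist structure of the knots. For fixed horizontal parameter, the knots $D(2n,m)$ are all obtained from one another by twisting along a single unknotted circle $C$ encircling the vertical twist region; thus $S^3\setminus n\big(D(2n,m)\cup C\big)$ is a fixed two-cusped manifold, and $M_n$, $M'_n$ are \emph{double} Dehn fillings of it, one slope recording the number of twists and the other recording the surgery coefficient on the knot. Starting from the associated two-component framed link and running Kirby calculus (Rolfsen twists and blow-downs), I would re-express each closed manifold as a filling $M_{4_1}(r_n)$, respectively $M_{4_1}(r'_n)$, along a single slope. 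The surgery coefficients $4n+1$ and $1$ are engineered so that the blow-down of the twisting circle absorbs the twist count into the surviving framing and leaves a one-cusped diagram whose underlying knot is the figure-eight; tracking how these coefficients transform under the blow-down, through the linking data of $C$ with the knot, is the delicate bookkeeping on which the whole argument rests.

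Granting the identifications $M_n\cong M_{4_1}(r_n)$ and $M'_n\cong M_{4_1}(r'_n)$, the hyperbolicity statement in part (b) reduces to checking that the slopes $r_n$ and $r'_n$ avoid the finitely many exceptional slopes of the figure-eight, namely $\infty$ and the integers $0,\pm1,\pm2,\pm3,\pm4$. By Thurston's hyperbolic Dehn surgery theorem this holds for all but finitely many $n$, and the remaining small values of $n$ can be checked directly (for instance with SnapPy), which is where any borderline cases would have to be addressed. Once hyperbolicity is known, \cite{OhtsukiFig8, wongYangF8} yields $\textit{lTV}(M_n)={\rm vol}(M_n)>0$ and $\textit{lTV}(M'_n)={\rm vol}(M'_n)>0$, completing part (b).

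Finally I assemble the estimates. Combining the monotonicity inequalities of the first paragraph with these equalities gives
\[
\textit{lTV}(M_{D_n})\geq {\rm vol}(M_n),\qquad \textit{lTV}(M_{D'_n})\geq {\rm vol}(M'_n),
\]
which is part (c); and since the right-hand sides are positive, the left-hand sides are too, so $D_n$ and $D'_n$ are $q$-hyperbolic, giving part (a). The genuinely substantial work is thus concentrated in the Kirby-calculus identification with figure-eight fillings and in the uniform control of the resulting slopes; once those are in hand, the two cited theorems do the rest.
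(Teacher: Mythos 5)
Your proposal follows essentially the same route as the paper: identify $M_n$ and $M'_n$ as Dehn fillings of $M_{4_1}$ by Kirby calculus on a twisting circle, invoke \cite{OhtsukiFig8, wongYangF8} for $\textit{lTV}=\mathrm{vol}>0$ on the filled manifolds, and then use the monotonicity of $\textit{lTV}$ under Dehn filling from \cite{detcherryKalfagianni2019gromov} to pass back to the knot complements. The one substantive point you leave open is precisely the step the paper carries out in Lemma \ref{lem: twist knots from 4_1}: a blow-up along an unknotted circle of linking number $2$ (resp.\ $0$) with $4_1$ followed by a Rolfsen twist/blow-down identifies $M_{D_n}(4n+1)\cong M_{4_1}(-(4n+1)/n)$ and $M_{D'_n}(1)\cong M_{4_1}(-1/n)$. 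Once these slopes are explicit, your proposed ``Thurston plus SnapPy for small $n$'' fallback is unnecessary: since $-(4n+1)/n=-4-1/n$ and $-1/n$ are non-integral for $|n|\geq 2$, they avoid the finite exceptional set $E_{4_1}=\{0,1/0,\pm1,\dots,\pm4\}$ of Proposition \ref{pro:ex} outright, and the excluded values $n=0,-1$ (and the borderline $n=\pm1$ cases) are exactly where the slopes land in $E_{4_1}$. So the approach is correct and matches the paper's; you have deferred, rather than done, the bookkeeping that both makes the hyperbolicity check uniform and explains the hypotheses on $n$.
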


Using Theorem \ref{thm: doubleknotintro}, we may conclude that many low-crossing knots are
$q$-hyperbolic. We refer to Tables \ref{tab: low crossing n} and \ref{tab: low crossing m} in
Section \ref{sec: low crossing tables}.

The knots $D(2n, -3)$ are fibered when $n <-1$ and the monodromies of their fibrations provide
explicit families of pseudo-Anosov  mapping classes acting on surfaces with a single boundary component that
satisfy the AMU conjecture. See Theorem \ref{AMUap}. These are the first examples known to satisfy
this conjecture that are constructed as monodromies of fibered knots in $S^3$. The examples of
\cite{DKAdvances} are coming from monodromies of fibered links of multiple components, while the
examples \cite{DKIndiana} come from monodromies of fibered knots in closed $q$-hyperbolic
3-manifolds.

A slope $p/q$ is called \emph{non-characterizing} for a knot $K\subset S^3$ if there is a knot $K'$
that is not equivalent to $K$ and such that $M_K(p/q)$ is homeomorphic to $M_{K'}(p/q)$. The
articles \cite{AJOT2013, AJLO2015, AbeTagami} give constructions of knots that admit
infinitely many non-characterizing slopes. Combining their techniques and results with Theorem
\ref{thm: doubleknotintro}, we are able to construct new infinite families of $q$-hyperbolic knots.

\begin{thm} \label{thm: 6_2annulustwist} There is an an infinite  set of knots ${\mathcal{K}}$ such
that:
  \begin{enumerate}[(a)]
  \item Every knot in ${\mathcal{K}}$ is $q$-hyperbolic.
   \item For every $K\in {\mathcal{K}}$, $M_{K}(-7) $ is homeomorphic to $M_{4_1}(-7/2)$ and it is
  $q$-hyperbolic.
  \item We have
  $$\textit{lTV}(M_{K})\geq {\rm vol}(M_{4_1}(-7/2))\approx 1.649610.$$
    \item No two knots in ${\mathcal{K}}$ are equivalent.
  \end{enumerate}
\end{thm}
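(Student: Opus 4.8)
The plan is to build the infinite family $\mathcal{K}$ by exploiting the non-characterizing slope machinery of Abe--Jong--Omae--Takeuchi and related work, applied to one of the knots furnished by Theorem \ref{thm: doubleknotintro}. Concretely, the target filling $M_{4_1}(-7/2)$ is a fixed hyperbolic Dehn filling of the figure-eight complement, so its volume and $q$-hyperbolicity are controlled by the Ohtsuki and Wong--Yang results on fillings of $4_1$ together with the Chen--Yang conjecture verified there. The starting knot should be chosen among the $D'_n = D(2n,-2)$ (or a specific low-crossing representative such as $6_2$, as the theorem's numerology with $M_{4_1}(-7/2)$ suggests) so that an integral surgery on it yields the same manifold $M_{4_1}(-7/2)$. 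I would first pin down one knot $K_0 \in \mathcal{K}$ with $M_{K_0}(-7) \cong M_{4_1}(-7/2)$, verifying the homeomorphism by a Kirby-calculus or Montesinos-trick computation, and confirming via Theorem \ref{thm: doubleknotintro} that $K_0$ is $q$-hyperbolic with the stated inequality $\textit{lTV}(M_{K_0}) \geq {\rm vol}(M_{4_1}(-7/2))$.

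The next step is to manufacture infinitely many companions sharing the slope $-7$. The annulus-twisting construction of \cite{AJOT2013, AbeTagami} produces, from a single knot admitting a suitable annulus presentation, an infinite sequence of mutually inequivalent knots $K_t$ (indexed by the number $t$ of twists) all of whose $-7$ fillings agree: $M_{K_t}(-7) \cong M_{K_0}(-7) \cong M_{4_1}(-7/2)$. I would record that $6_2$ (equivalently the relevant double twist knot) admits such an annulus presentation compatible with the slope $-7$, invoke their theorem to obtain the family, and set $\mathcal{K} = \{K_t\}$. This immediately delivers part (b): every $M_{K_t}(-7)$ is the fixed $q$-hyperbolic manifold $M_{4_1}(-7/2)$, whose $q$-hyperbolicity and volume estimate come from the figure-eight filling literature cited in the introduction.

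For part (a) and the lTV bound (c), the key input is the Detcherry--Kalfagianni monotonicity of Turaev--Viro invariants under Dehn filling \cite{detcherryKalfagianni2019gromov}: since $M_{4_1}(-7/2)$ is obtained from each $M_{K_t}$ by a single Dehn filling, one gets $\textit{lTV}(M_{K_t}) \geq \textit{lTV}(M_{4_1}(-7/2)) = {\rm vol}(M_{4_1}(-7/2)) > 0$, which is precisely the displayed inequality in (c) and forces $q$-hyperbolicity in (a). I would also check that each $K_t$ is genuinely hyperbolic (so that ``$q$-hyperbolic knot'' is the right descriptor), which follows because a non-trivial annulus twist of a hyperbolic knot is generically hyperbolic, or can be verified directly on the family. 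Part (d), that no two knots in $\mathcal{K}$ are equivalent, is handed to us by the annulus-twisting construction, whose whole point is to produce pairwise-distinct knots; the standard way this is certified is by computing an invariant that changes with $t$, such as the Alexander polynomial or the knot genus.

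The main obstacle I anticipate is the compatibility bookkeeping at the outset: one must verify that the chosen seed knot simultaneously (i) is one of the $q$-hyperbolic double twist knots of Theorem \ref{thm: doubleknotintro} or surgers to the right figure-eight filling, (ii) carries an annulus presentation whose associated twisting leaves the $-7$ filling unchanged, and (iii) yields the specific identification $M_{K_0}(-7) \cong M_{4_1}(-7/2)$. Matching these three constraints — essentially reconciling the surgery description coming from the double twist family with the annulus-twist framework of \cite{AJOT2013, AbeTagami} — is the delicate part; once the seed is correctly set up, parts (a)--(d) follow formally from the cited monotonicity, the figure-eight filling volume conjecture, and the distinctness built into annulus twisting.
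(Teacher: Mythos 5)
Your overall strategy matches the paper's: seed the family with $6_2 = D(-4,-3) = D_{-2}$, so that Theorem \ref{thm: doubleknotintro} gives $M_{6_2}(-7)\cong M_{4_1}(-7/2)$ with $q$-hyperbolicity, iterate the $(-7)$-fold annulus twist of \cite{AJLO2015} to get knots $K_i$ with $M_{K_i}(-7)\cong M_{4_1}(-7/2)$, and then deduce (a) and (c) from the Dehn-filling monotonicity of $lTV$ together with the Ohtsuki/Wong--Yang identity $lTV(M_{4_1}(-7/2))={\rm vol}(M_{4_1}(-7/2))$. (One small correction: the seed must come from the family $D_n=D(2n,-3)$ with surgery slope $4n+1$, not from $D'_n=D(2n,-2)$, whose shared fillings with $4_1$ all occur at slope $1$ and can never produce $-7/2$.)

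The genuine gap is in part (d). You write that pairwise inequivalence ``is handed to us by the annulus-twisting construction, whose whole point is to produce pairwise-distinct knots.'' That is not true: annulus twisting only guarantees homeomorphic surgeries, and in general the resulting knots need not be distinct --- this is exactly why Theorem \ref{thm: doubleknot2seq} for the knots $D'_n$ does \emph{not} claim distinctness (those knots have non-monic Alexander polynomial, hence no good annulus presentation, and the iterated family may a priori be finite). Distinctness requires verifying that the seed admits a \emph{good} annulus presentation in the sense of \cite[Definition 3.14]{AJLO2015} --- a concrete check on the arcs obtained by cutting along the disk bounded by the unknotted curve underlying the presentation (exactly one $(+-)$ and one $(-+)$ arc with linking number $\pm 1$, and sign conditions on intersections with the disk bounded by $c$). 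Only then does \cite[Lemma 3.12]{AJLO2015} give that each annulus twist strictly increases the degree of the Alexander polynomial, which is the invariant certifying that no two $K_i$ are equivalent. You correctly guess that the Alexander polynomial is the certifying invariant, but the verification that $6_2$'s simple annulus presentation is good is the one nontrivial step of the proof of (d), and it cannot be waved through as automatic.
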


To apply the methods of \cite {AJLO2015} one needs to start with a knot $K_0$ that admits an
``annulus presentation". Then, for any non-zero $n\in \N$, one applies a certain operation called an
``$n$-fold annulus twist" repeatedly to generate a family of knots ${\mathcal{K}}$, so that  for any
$K\in {\mathcal{K}}$ we have   $M_{K}(n)\cong M_{K_0}(n)$. The method of the proof of Theorem
\ref{thm: 6_2annulustwist}  is as follows: First we show that the six crossing knot $6_2$ is
$q$-hyperbolic and that the 3-manifold $M_{6_2}(-7)$, obtained by $-7$-surgery on $6_2$, is
homeomorphic to $M_{4_1}(-7/2)$ and  is $q$-hyperbolic. Then we verify that the knot $6_2$ has an
``annulus presentation" to which we apply a ``$-7$-fold annulus twist" inductively, to generate a
family of knots ${\mathcal{K}}$. In this case, the annulus presentation of $6_2$ is nice in a
certain sense (see 
Remark \ref{complicated}), and we are able to argue that the resulting knots have mutually distinct Alexander
polynomials. The reader is referred to Section  \ref{sec:further classes}  for the definitions of annulus presentations and annulus twists
and for the details of our construction.
The annulus twisting technique also applies to each of the
knots $D'_n:=D(2n, -2)$ to produce families of $q$-hyperbolic knots. However, in this case we don't
know whether the resulting knots are necessarily distinct. We have the following:

\begin{thm}\label{thm: doubleknot2seq} For any  $|n| > 1$, let $D'_n:= D(2n, -2)$. There is a
sequence knots $\{K_n^i\}_{i\in \mathbb{N}}$ such that, for any $i\in \N$, 
\begin{enumerate}[(a)]
\item the knot $K_n^i$ is $q$-hyperbolic;
\item the 3-manifold
$M_{K^i_n}(1)$ is homeomorphic to $M_{ D'_n}(1)$ and it is $q$-hyperbolic.

\end{enumerate}
\end{thm}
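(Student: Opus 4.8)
The plan is to combine the annulus-twisting technique from \cite{AJLO2015} with Theorem \ref{thm: doubleknotintro}. By part (b) of that theorem, for each $|n|>1$ the knot $D'_n:=D(2n,-2)$ is hyperbolic and the manifold $M'_n:=M_{D'_n}(1)$ obtained by $1$-surgery is both hyperbolic and $q$-hyperbolic. The key geometric input supplied there is the inequality $\textit{lTV}(M_{D'_n})\geq {\rm vol}(M'_n)$ from part (c). My strategy is to produce, for each fixed $n$, an infinite sequence of knots sharing the \emph{same} $1$-surgery as $D'_n$, and then transport the $q$-hyperbolicity of $D'_n$ to each member of the sequence via the common Dehn filling.

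First I would verify that each $D'_n$ admits an annulus presentation in the sense of \cite{AJLO2015}; the double twist knots $D(2n,-2)$ are exactly the kind of knot for which such a presentation is available, and indeed the operative surgery coefficient there is $1$ (or $-1$), matching the slope appearing in $M_{D'_n}(1)$. Granting the annulus presentation, I would apply the $1$-fold annulus twist operation inductively to generate the sequence $\{K_n^i\}_{i\in\mathbb{N}}$, setting $K_n^0:=D'_n$. The defining property of the annulus twist is precisely that it preserves the result of the relevant integral surgery, so for every $i$ we obtain a homeomorphism
\begin{equation*}
M_{K_n^i}(1)\cong M_{D'_n}(1)=M'_n.
\end{equation*}
This immediately yields part (b): since $M'_n$ is $q$-hyperbolic by Theorem \ref{thm: doubleknotintro}(b), so is every $M_{K_n^i}(1)$, being the same manifold up to homeomorphism and $\textit{lTV}$ being a homeomorphism invariant.

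For part (a), the $q$-hyperbolicity of each individual knot $K_n^i$, I would invoke the behavior of the Turaev--Viro invariants under Dehn filling established in \cite{detcherryKalfagianni2019gromov}, which is exactly the tool used to prove Theorem \ref{thm: doubleknotintro} in the first place. The relevant estimate gives $\textit{lTV}(M_{K_n^i})\geq \textit{lTV}\bigl(M_{K_n^i}(1)\bigr)=\textit{lTV}(M'_n)$, and the right-hand side is strictly positive because $M'_n$ is $q$-hyperbolic. Hence $\textit{lTV}(M_{K_n^i})>0$, i.e. $K_n^i$ is $q$-hyperbolic, which is part (a).

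The main obstacle I anticipate is \emph{not} in the $q$-hyperbolicity bookkeeping, which follows formally once the annulus presentation and the filling estimate are in place, but rather in checking the hypotheses that make the annulus-twist machinery applicable: confirming that $D'_n$ genuinely has an annulus presentation with the correct framing for all $|n|>1$, and ensuring that the construction produces genuine knots to which the Detcherry--Kalfagianni filling inequality applies (in particular that the exceptional filling slope does not collapse the estimate). Notably, I would expect \emph{not} to be able to assert that the $K_n^i$ are pairwise distinct — unlike in Theorem \ref{thm: 6_2annulustwist}, the annulus presentation here need not be ``nice'' enough to force distinct Alexander polynomials — which is exactly why the statement of Theorem \ref{thm: doubleknot2seq} omits a non-equivalence clause. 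I would therefore present the sequence $\{K_n^i\}$ without claiming injectivity, leaving the distinctness question open as the excerpt's surrounding discussion already flags.
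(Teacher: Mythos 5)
Your proposal is correct and follows essentially the same route as the paper: set $K_n^0:=D'_n$, iterate the $1$-fold annulus twist to get $M_{K_n^i}(1)\cong M_{D'_n}(1)$ via Theorem \ref{thm: ann pres same mfd}, deduce (b) from Theorem \ref{thm: doubleknotintro}, and deduce (a) from Theorem \ref{thm: lTV bounded}. The one hypothesis you flag but leave unverified --- that $D'_n$ admits an annulus presentation --- is settled in the paper by observing that these double twist knots have unknotting number one and applying Lemma \ref{lem: unknotting 1}; your remarks about not claiming distinctness match the paper's discussion exactly.
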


\subsection{Organization} The paper is organized as follows: We give a proof of Theorem \ref{thm:
doubleknotintro} in Section \ref{sec: q-hyp knots}. In Section \ref{sec:further classes}, first we
recall the definitions and results from \cite{AJOT2013, AJLO2015, AbeTagami} relevant here, and then
we prove Theorems \ref{thm: 6_2annulustwist} and \ref{thm: doubleknot2seq}. We apply our results to
the conjecture of \cite{AMU} in Section \ref{sec: AMUsection}. Finally,  in Section \ref{sec: low
crossing tables} we list all knots up  to ten crossings, and all knots from the SnapPy census of
hyperbolic cusped 3-manifolds with triangulation complexity at most nine,
that can be shown to be $q$-hyperbolic
using our methods.
\begin{ack*}
 {\rm The authors  thank Dave Futer for several helpful discussions during this project, and Futer, Purcell and Schleimer for
 generously sharing the data in Tables   \ref{tab: qhyp knots2-7} and \ref{tab: qhyp
  knots8-9}  with us.}
 \end{ack*}

\medskip
%%%%%%%%%%%%%%%%%%%%%%%%%%%%%%%%%%%%%%%%%

\section{\texorpdfstring{$q$}{q}-hyperbolic double twist knots}\label{sec: q-hyp knots} In this section, we will show
the $q$-hyperbolicity of two families of knots in $S^3$ which share hyperbolic Dehn surgeries with
the figure-eight knot.

Suppose $M$ is a compact 3-manifold with empty or toroidal boundary. If $M$ is hyperbolic, by Mostow
rigidity the volume of a hyperbolic metric is a topological invariant of $M$ denoted by ${\rm
vol}(M)$. If $M$ is disconnected the total volume is the sum of volumes over all connected
components. In general, by the geometrization theorem, $M$ admits a unique decomposition along tori
into manifolds with toroidal boundary that are Seifert fibered spaces or hyperbolic. Let $H_M$
denote the union of the hyperbolic components in the geometric decomposition of $M$. For the
purposes of this paper we define the Gromov norm of $M$ by
$$||M||:=\frac{ {\rm vol}(H_M)}{v_{\rm{tet}}},$$ where  $v_{\rm{tet}}= 1.01494\dots$ is the volume
of a regular ideal tetrahedron and ${\rm vol}(H_M)$ denotes the total volume of $H_M$. By work of
Thurston \cite{ThurstonGT3manifolds}, the Gromov norm of 3-manifolds with toroidal boundary
does not increase under Dehn-filling. That is,  if $M$ is a $3$-manifold with toroidal  boundary, and $M'$
is obtained by Dehn-filling of some components of $\partial M$, then $||M'||\leq ||M||$.

The asymptotics of the Turaev--Viro invariants have an analogous property, as shown  by
Detcherry-Kalfagianni \cite{detcherryKalfagianni2019gromov}.

\begin{thm}[\cite{detcherryKalfagianni2019gromov}, Corollary 5.3]\label{thm: lTV bounded} Let $M$ be
  a compact oriented 3-manifold with nonempty toroidal boundary and let $M'$ be a manifold obtained
  from $M$ by Dehn-filling some of the boundary components. Then 
  \[
  lTV(M') \leq lTV(M).  
  \]
  In particular, if $M'$ is $q$-hyperbolic then $M$ is $q$-hyperbolic.
\end{thm}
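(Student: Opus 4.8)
The plan is to deduce the inequality from the compatibility of the Turaev--Viro invariants with the cutting-and-gluing operations of the underlying $SO(3)$ topological quantum field theory, combined with a single application of the Cauchy--Schwarz inequality. Throughout, write $\partial M = T_1 \sqcup \cdots \sqcup T_k$ and suppose $M'$ is obtained by Dehn filling the components $T_1, \dots, T_j$ along slopes $\gamma_1, \dots, \gamma_j$. The starting point is the now-standard identity $TV_r = RT_r \otimes \overline{RT_r}$ relating the Turaev--Viro theory to the Reshetikhin--Turaev theory, which at the level of invariants of a manifold with toroidal boundary takes the form
\[
TV_r(M; q) = \kappa_r \sum_{\vec c} \bigl| RT_r(M)_{\vec c} \bigr|^2,
\]
where $\vec c = (c_1, \dots, c_k)$ runs over the admissible colorings of $\partial M$ by the labeling set of the theory, $RT_r(M)_{\vec c}$ denotes the component of the Reshetikhin--Turaev vector $RT_r(M) \in V_r(\partial M) = \bigotimes_i V_r(T_i)$ in the standard orthonormal basis $\{e_{\vec c}\}$, and $\kappa_r$ is a normalization constant that is polynomially bounded in $r$. (For $M = S^3 \setminus L$ this is precisely the colored-Jones expansion of the Detcherry--Kalfagianni--Yang formula.)

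Next I would record the gluing formula for the filling. Capping off $T_i$ by a solid torus along $\gamma_i$ amounts, at the TQFT level, to pairing $RT_r(M)$ against the solid-torus vector $v_{\gamma_i} \in V_r(T_i)$, so that for each coloring $\vec d = (c_{j+1}, \dots, c_k)$ of the unfilled boundary,
\[
RT_r(M')_{\vec d} = \sum_{c_1, \dots, c_j} \Bigl( \prod_{i=1}^{j} (v_{\gamma_i})_{c_i} \Bigr) RT_r(M)_{\vec c}.
\]
The crucial structural point is that $v_{\gamma_i}$ is the image of the trivial-coloring vector $e_0$ under the (projective, unitary) mapping-class-group representation of $V_r(T^2)$ by the element of $SL_2(\Z)$ carrying the meridian to $\gamma_i$; unitarity gives $\|v_{\gamma_i}\| = \|e_0\| = 1$ independently of the slope and of $r$.

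Now I would estimate. For each fixed $\vec d$, Cauchy--Schwarz applied to the sum over $(c_1, \dots, c_j)$, together with the factorization of the resulting sum of products, yields
\[
\bigl| RT_r(M')_{\vec d} \bigr|^2 \le \Bigl( \prod_{i=1}^{j} \|v_{\gamma_i}\|^2 \Bigr) \sum_{c_1, \dots, c_j} \bigl| RT_r(M)_{\vec c} \bigr|^2 = \sum_{c_1, \dots, c_j} \bigl| RT_r(M)_{\vec c} \bigr|^2 .
\]
Summing over $\vec d$ and re-inserting the normalization constants gives $TV_r(M'; q) \le C_r \cdot TV_r(M; q)$ for a constant $C_r$ that grows at most polynomially in $r$. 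Taking $\tfrac{2\pi}{r} \log$ of both sides, the term $\tfrac{2\pi}{r}\log C_r$ tends to $0$, so passing to the $\liminf$ over odd $r$ leaves the inequality $lTV(M') \le lTV(M)$; the final assertion is then immediate, since $lTV(M') > 0$ forces $lTV(M) > 0$.

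I expect the main obstacle to be the bookkeeping in the first two steps rather than the estimate itself. One must pin down the precise normalizations of $TV_r$ and $RT_r$ for manifolds with several boundary tori, so that $C_r$ is genuinely polynomial and filling different numbers of components is handled uniformly, and one must verify that the TV--RT correspondence and the solid-torus gluing hold with compatible conventions at the root $q = e^{2\pi i/r}$ for odd $r$. Once these conventions are fixed, the unitarity of the $SL_2(\Z)$-action makes the slope-dependence of the bound disappear, and the argument reduces to the Cauchy--Schwarz estimate above.
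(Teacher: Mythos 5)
This statement is imported into the paper from \cite{detcherryKalfagianni2019gromov} (Corollary 5.3) without proof, so there is no internal argument to compare against; measured against the proof in that reference, your proposal follows essentially the same route: identify $TV_r(M)$ with the square norm of the Reshetikhin--Turaev vector $RT_r(M)\in V_r(\partial M)$, realize Dehn filling as pairing with solid-torus vectors in the $SL_2(\Z)$-orbit of $e_0$, and apply Cauchy--Schwarz, with all normalization constants absorbed by $\tfrac{2\pi}{r}\log(\cdot)$. The one point you defer to ``convention checking'' that genuinely needs an input beyond formal TQFT axioms is positivity: at $q=e^{2\pi i/r}$ the $SO(3)$ theory is not unitary in general, so both your Cauchy--Schwarz step and the claimed unitarity of the $SL_2(\Z)$-action require the Hermitian form to be positive definite on $V_r(T^2)$; this holds because the solid-torus basis is orthonormal, $\langle e_i,e_j\rangle = RT_r(S^1\times S^2;\ \text{cores colored } i,j)=\delta_{ij}$ at any root, which is precisely what the cited reference verifies. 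With that made explicit, your argument is complete and matches the known proof.
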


Let $K$ be a knot in the 3-sphere with complement $M_{K}$. Recall that isotopy classes of simple
closed curves on $\partial M_K$ are  in one to one correspondence with slopes $p/q\in \Q\cup
\{1/0\}$. Slopes of the form $p/1$ we will be denoted by $p$. Given a slope $p/q$, let $M_K(p/q)$
denote the 3-manifold obtained by $p/q$-surgery along $K$ (i.e.  $M_K(p/q)$ is obtained by a
Dehn-filling of $M_K$ along the simple closed curve of slope $p/q$ on $\partial M_K$). If $K$ is
hyperbolic and $M_K(p/q)$ is not hyperbolic, we say that $p/q$ is an \emph{exceptional slope} of
$K$.

Let $M_{4_1}$ denote  the complement of figure-eight knot $4_1$. The following is well known:

\begin{prop} \label{pro:ex} The set of the exceptional slopes of the knot figure-eight knot is
$E_{4_1}:= \{0, 1/0,  \pm 1, \pm 2, \pm 3, \pm 4 \}.$ Thus for any $p/q\notin E_{4_1}$ the
3-manifold $M_{4_1}(p/q)$ is hyperbolic.
\end{prop}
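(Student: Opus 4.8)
The plan is to combine the amphichirality of the figure-eight knot with Thurston's hyperbolic Dehn surgery theorem and his explicit analysis of the incomplete structures on $M_{4_1}$. Throughout I take for granted that $M_{4_1}$ is hyperbolic, its complete structure arising from gluing two regular ideal tetrahedra.

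First I would record the symmetry that forces the exceptional set to be invariant under $p/q \mapsto -p/q$. Since $4_1$ is amphichiral, $M_{4_1}$ admits an orientation-reversing self-homeomorphism. Its restriction to $\partial M_{4_1}$ preserves the meridian and longitude slopes and is orientation-reversing on the boundary torus, so it acts on the set of slopes by $p/q \mapsto -p/q$. Consequently $M_{4_1}(p/q)$ and $M_{4_1}(-p/q)$ are homeomorphic, and it suffices to analyze the nonnegative slopes, the statement for negative slopes then following formally.

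Next I would verify that each slope in $E_{4_1}$ is genuinely exceptional by exhibiting non-hyperbolic fillings. The meridional slope $1/0$ gives $M_{4_1}(1/0)\cong S^3$. The longitudinal slope $0$ caps off the once-punctured-torus fiber of the fibration of $4_1$, producing a torus bundle over $S^1$ with Anosov monodromy, a $\mathrm{Sol}$-manifold. The slopes $\pm 1, \pm 2, \pm 3$ yield small Seifert fibered spaces, while $\pm 4$ yields toroidal graph manifolds; in each case one confirms non-hyperbolicity by identifying the filling explicitly or by producing an essential torus. None of these ten manifolds is hyperbolic, so every slope in $E_{4_1}$ is exceptional.

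The main obstacle is the converse: showing that every slope $p/q \notin E_{4_1}$ produces a hyperbolic manifold. Thurston's hyperbolic Dehn surgery theorem only yields that the exceptional set is finite, which is far weaker than the exact list. To pin down the complement of $E_{4_1}$ I would follow Thurston's deformation-variety computation: parametrize the incomplete hyperbolic structures on $M_{4_1}$ by the shape parameter of the two ideal tetrahedra, impose the completeness (Dehn-filling) equation for the slope $p/q$, and determine precisely which slopes admit a solution whose tetrahedra are positively oriented and whose metric completion is a smooth hyperbolic structure. Carrying this out shows the solution fails exactly on the slopes in $E_{4_1}$. Since this geometric computation is classical, the cleanest route in the write-up is to invoke Thurston's original determination of the exceptional surgeries on the figure-eight knot rather than reproduce the deformation analysis.
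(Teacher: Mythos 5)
Your proposal is correct and takes essentially the same route as the paper: the paper states Proposition \ref{pro:ex} without proof as ``well known,'' implicitly deferring to Thurston's classical deformation analysis of the figure-eight complement, which is precisely what you invoke for the hard direction (that every slope outside $E_{4_1}$ is hyperbolic). Your supplementary details --- the amphichirality symmetry $p/q \mapsto -p/q$ and the identification of the ten exceptional fillings ($S^3$, the Sol torus bundle, the small Seifert fibered spaces at $\pm 1, \pm 2, \pm 3$, and the toroidal fillings at $\pm 4$) --- are all accurate.
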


The asymptotics of the Turaev--Viro invariants of hyperbolic manifolds obtained by surgery on the
figure eight knot are well understood. Ohtsuki \cite{OhtsukiFig8} proved that hyperbolic manifolds
obtained by integral surgeries on $4_1$ satisfy the volume conjecture, and the result was extended
to rational surgeries by Wong and Yang \cite{wongYangF8}. 

\begin{thm}[\cite{OhtsukiFig8,wongYangF8}]\label{thm: Ohtsuki, Wong-Yang q-hyp} For any
non-exceptional slope $p/q$ of the knot $4_1$ we have
$$lTV(M_{4_1}(p/q))={\rm vol}(M_{4_1}(p/q)),$$ and hence, in particular, $M_{4_1}(p/q)$ is
$q$-hyperbolic.
\end{thm}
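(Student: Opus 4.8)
The plan is to follow the strategy of Ohtsuki and of Wong--Yang, reducing the statement to an asymptotic analysis of an explicit quantum invariant. First I would pass from the Turaev--Viro invariant of the closed manifold $N := M_{4_1}(p/q)$ to the Reshetikhin--Turaev invariant via the identity $TV_r(N; q) = |RT_r(N; q)|^2$, valid at $q = e^{2\pi i/r}$ for odd $r$. This converts the computation of $lTV(N)$ into a computation of the exponential growth rate of $|RT_r(N)|$. Next, using the surgery presentation of $N$ as $p/q$-surgery on the figure-eight knot, I would expand $RT_r(N)$ by the surgery formula as a finite sum over colors $n$ of the colored Jones polynomial $J_n(4_1; q)$, weighted by the quantum dimension $[n]$ and a Gaussian framing factor determined by the slope. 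For $q$-denominators one passes through the $SL_2(\mathbb{Z})$-action realizing the surgery, which produces a Gauss sum of modulus $q$; this is the technical point that separates the rational case (Wong--Yang) from the integral case (Ohtsuki), and it is handled by Gauss-sum reciprocity.

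Second, I would insert Habiro's explicit formula for $J_n(4_1; q)$ and rewrite each summand, after setting $q = e^{2\pi i/r}$ and writing $n = tr$ for $t$ in a fixed range, in the form $e^{\,r\,\Phi(t)\,+\,o(r)}$. The potential $\Phi$ is assembled from the asymptotics of the quantum factorials appearing in the Habiro sum (equivalently, from the asymptotic expansion of the quantum dilogarithm), together with the quadratic contribution of the surgery coefficient. I would then apply the saddle-point (steepest-descent) method to extract the dominant exponential rate, which is governed by the value of $\Phi$ at its leading critical point.

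Third --- and this is the geometric heart --- I would identify that critical point with the complete hyperbolic structure on $N$, which exists because $p/q \notin E_{4_1}$ forces $N$ to be hyperbolic by Proposition \ref{pro:ex}. After changing variables to the holonomies $u, v$ of the meridian and longitude of the figure-eight complement, the critical-point equation for $\Phi$ becomes the Dehn-filling equation $p \log u + q \log v = 2\pi i$ (up to the standard normalization) for the slope $p/q$, whose solution is exactly the geometric point. The Neumann--Zagier relation then identifies the leading saddle value with the complex volume of $N$, so that its contribution to the growth rate is precisely $\mathrm{vol}(N)$, yielding $lTV(N) = \mathrm{vol}(N) > 0$.

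The main obstacle is the rigorous asymptotic analysis underlying the second and third steps: one must prove that the geometric saddle genuinely dominates the sum --- that $\mathrm{Re}\,\Phi$ attains its maximum there and that competing critical points, boundary terms, and the oscillatory Gauss-sum factor do not produce equal or larger growth --- and one must control the sub-exponential error uniformly in $r$ so that the $\liminf$ defining $lTV(N)$ actually equals $\mathrm{vol}(N)$. Matching the saddle value to the volume exactly, rather than up to an undetermined constant, further requires pinning down the correct branch of the dilogarithm, for which one uses the known volume of $M_{4_1}$ and the Neumann--Zagier potential of the figure-eight complement as an anchor.
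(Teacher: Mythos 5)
The paper does not actually prove this statement: it is imported as a black box from Ohtsuki \cite{OhtsukiFig8} (integral slopes) and Wong--Yang \cite{wongYangF8} (rational slopes), and everywhere else in the paper it is only ever invoked as a citation. Your outline is a faithful reconstruction of the strategy of those two papers --- passing from $TV_r$ to $|RT_r|^2$ in the $SO(3)$ theory, expanding $RT_r$ of the surgered manifold via the surgery formula together with Habiro's formula for the colored Jones polynomials of $4_1$, handling denominators $q>1$ by Gauss-sum reciprocity, and running a saddle-point analysis whose critical value is matched to ${\rm vol}(M_{4_1}(p/q))$ through the Neumann--Zagier potential, with hyperbolicity of the filling supplied by Proposition \ref{pro:ex}. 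So you have identified the correct route.

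As a standalone proof, however, there is a genuine gap, and it sits exactly where you flag it: showing that the geometric critical point of the potential $\Phi$ actually dominates --- that ${\rm Re}\,\Phi$ attains its maximum there, that competing critical points, boundary contributions, and the oscillatory Gauss-sum factors contribute strictly smaller exponential growth, and that the subexponential errors are controlled uniformly in $r$ so that the $\liminf$ over odd $r$ equals ${\rm vol}(M_{4_1}(p/q))$ rather than merely being bounded above by it. This dominance is not automatic; it is the reason the Chen--Yang conjecture remains open in general, and the figure-eight fillings are tractable precisely because this estimate can be carried out explicitly for them. That verification constitutes essentially the entire analytic content of \cite{OhtsukiFig8,wongYangF8} and cannot be deferred as an ``obstacle'' in a proof. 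A smaller point to watch: the identity $TV_r(N)=|RT_r(N)|^2$ must be taken in the $SO(3)$ normalization at the matched pair of roots of unity (as in \cite{colJvolDKY}); mixing the $SU(2)$ and $SO(3)$ conventions changes the growth rate. In short, your plan coincides with the cited proofs, but the steepest-descent dominance argument \emph{is} the theorem, not a detail.
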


Recall that for  $m,n \in \mathbb{Z}$, we denote by $D(m,n)$ the double twist knot shown in Figure
  \ref{fig:doubletwistknot}.

 Next we construct two families of $q$-hyperbolic double twist knots parametrized by an integer $n$,
 which we denote by $D_n := D(2n, -3)$ and $D_n' := D(2n, -2)$.
  
\begin{customthm}{\ref{thm: doubleknotintro}} For any integer $n\neq 0, -1$, the following are true:
\begin{enumerate}[(a)]
 \item The knots  $D_n:=D(2n, -3)$ and $D'_n:=D(2n, -2)$ are $q$-hyperbolic. \item The 3-manifolds
  $M_n:=M_{D_n}(4n+1)$ and $M'_n:=M_{D'_n}(1)$ are hyperbolic and $q$-hyperbolic. 

   \item We have
  $$\textit{lTV}(M_{D_n})\geq {\rm vol}(M_n), \ \ {\rm and} \ \  \textit{lTV}(M_{D'_n})\geq {\rm
  vol}(M'_n).$$
  
  \end{enumerate}
\end{customthm}
    
 We will need the following lemma:

\begin{lem}\label{lem: twist knots from 4_1} For any $n\in \Z$ we have the following:
 \begin{enumerate}[(a)]
    \item The 3-manifold $M_{4_1}((-4n-1)/n)$ is homeomorphic to $M_{D_n}(4n+1)$.
    \item The 3-manifold $M_{4_1}(-1/n)$ is homeomorphic to $M_{D_n'}(1)$.
  \end{enumerate} 
\end{lem}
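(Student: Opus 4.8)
The plan is to deduce both homeomorphisms from the principle that inserting full twists along an unknotted circle is the same as Dehn surgery on that circle, and then to reduce the resulting two-component surgery diagram, by Kirby calculus, to a single surgery on $4_1$. The precise statement I would use is this: if $K\cup C\subset S^3$ with $C$ an unknot bounding a disk that meets $K$, and $K_t$ is obtained from $K$ by inserting $t$ full twists into the strands of $K$ through that disk, then the exterior of $K_t$ is obtained from $S^3\setminus(K\cup C)$ by $(-1/t)$-filling of $C$. Consequently, every filling $M_{K_t}(\gamma)$ equals the surgery on the link $K\cup C$ with coefficients $(\gamma,-1/t)$.

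First I would set up the diagram. In the standard picture of $D(2n,m)$, let $C$ be the unknot encircling the two strands of the vertical twisting region. Cancelling all $2n$ half-twists exhibits $D(2n,m)$ as the result of $n$ full twists along $C$ applied to the base knot $U:=D(0,m)$, which is an unknot; hence
\[
M_{D(2n,m)}(\gamma)\ \cong\ \bigl(S^3\setminus(U\cup C)\bigr)_{U:\,\gamma,\ C:\,-1/n}.
\]
The decisive point is the orientation of the two strands through $C$, which is governed by the parity of the horizontal count $m$: for $m=-2$ they are anti-parallel, so $\mathrm{lk}(U,C)=0$ and $U\cup C$ is the Whitehead link, whereas for $m=-3$ they are parallel, so $\mathrm{lk}=\pm2$. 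This dichotomy is exactly what produces the two different shapes of slope, $-1/n$ in (b) versus $(-4n-1)/n$ in (a).

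For part (b) the link is the Whitehead link $W=U\cup C$, which carries a homeomorphism of $S^3$ exchanging its two components, and $(-1/k)$-filling of either component turns the other into $D(2k,-2)$; at $k=1$ this is $D(2,-2)=4_1$. Thus $M_{D'_n}(1)=(S^3\setminus W)_{U:\,1,\ C:\,-1/n}$; applying the exchange symmetry gives $(S^3\setminus W)_{U:\,-1/n,\ C:\,1}$, and filling $C$ with slope $1$ turns $U$ into a copy of $4_1$, leaving the slope $-1/n$. Hence $M_{D'_n}(1)\cong M_{4_1}(-1/n)$, where the amphichirality of $4_1$ absorbs the global sign. As a consistency check, $n=1$ reads $M_{4_1}(1)\cong M_{4_1}(-1)$, again by amphichirality.

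The main obstacle is part (a). The same reduction applies, but $D(0,-3)\cup C$ is no longer symmetric, and now $\mathrm{lk}(U,C)=\pm2$, so eliminating $C$ shifts the integer framing by $\ell^2=4$ per twist: the coefficient $4n+1$ on $D_n$ should be absorbed into an integer part $-4$ together with the twisting term $-1/n$, and the continued fraction $-4-\tfrac1n=(-4n-1)/n$ matches the asserted slope. Making this precise is the hard part, since cancelling the vertical twists lands on the base $D(2,-3)=5_2$ rather than on the figure-eight; reaching $M_{4_1}((-4n-1)/n)$ requires an explicit chain of Rolfsen twists and handle slides, or, more cleanly, realizing both surgeries as fillings of one common two-bridge (chain) link — for instance a filling of the magic manifold — and invoking its symmetries, and then verifying that the reduced knot really is $4_1$ rather than its mirror or $5_2$. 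Throughout, the genuinely delicate bookkeeping is the choice of sign and orientation conventions — the twisting direction, the linking signs, and the framings — so that exactly the slopes $(-4n-1)/n$ and $-1/n$ emerge with the stated signs; here the amphichirality of $4_1$ is a convenient safeguard against a global sign error in the companion slope.
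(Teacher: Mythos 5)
Your overall strategy --- trade the twist regions for $(-1/n)$-surgery on an encircling unknot, then reduce the two-component surgery diagram by Kirby calculus --- is exactly the paper's strategy (the paper runs the moves in the other direction, starting from a surgery diagram of $M_{4_1}((-4n-1)/n)$, blowing up along an unknot of linking number $\pm 2$, resp.\ $0$, with $4_1$, isotoping to exchange the two components, and blowing down). The framing arithmetic you record, namely $-4-\tfrac1n=(-4n-1)/n$ coming from $\mathrm{lk}^2=4$ in case (a) and no shift in case (b), matches the paper's computation $-(4n+1)/n+\mathrm{lk}(U,4_1)^2=-1/n$. But there is a genuine gap: for part (a) you explicitly stop at the decisive step. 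The entire content of (a) is the verification that the link $D(0,-3)\cup C$ (unknot plus a linking-number-$2$ encircling curve), after filling the $D(0,-3)$-component with the slope forced by your framing bookkeeping, yields the \emph{figure-eight} knot with the residual slope $(-4n-1)/n$ --- equivalently, that this link admits a component-exchanging isotopy onto the link $4_1\cup U$ used in the paper's Figure. You name this as ``the hard part,'' gesture at ``an explicit chain of Rolfsen twists and handle slides'' or a detour through the magic manifold, and do neither. That is precisely the step the paper supplies (via its explicit sequence of blow-up, exchanging isotopy, and blow-down), so as written (a) is an outline, not a proof.

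Part (b) is closer to complete, and your use of the Whitehead link exchange symmetry is a legitimate repackaging of the paper's exchanging isotopy. However, the safeguard ``amphichirality of $4_1$ absorbs the global sign'' does not cover the actual risk in your bookkeeping. By your own normalization, $(-1/k)$-filling of a component yields $D(2k,-2)$, so filling the exchanged component at slope $+1$ corresponds to $k=-1$ and produces $D(-2,-2)$, which is a \emph{trefoil}, not $4_1$; only the slope $-1$ produces $D(2,-2)=4_1$. Amphichirality of $4_1$ repairs a discrepancy between $M_{4_1}(1/n)$ and $M_{4_1}(-1/n)$, but it cannot repair landing on the trefoil exterior (a Seifert fibered piece) instead of the figure-eight exterior. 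So you must additionally verify that the exchange symmetry you invoke carries the slope-$1$ filling on one cusp to the slope that yields $4_1$ on the other (equivalently, pin down whether that symmetry preserves or negates the standard framings). This is exactly the sign-and-orientation bookkeeping you flag as delicate; flagging it is not the same as doing it.
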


\begin{proof} For $n=0$ both $(a)$ and $(b)$ are trivially true: For, both $D_0, D'_0$ are the
trivial knot and we have:  $M_{4_1}(1/0)\cong M_{D_0}(1)=M_{D'_0}(1)\cong S^3$.

Next suppose that $n\neq 0$. Part $(a)$ follows from the fact that the 3-manifold
  $M_{4_1}(-(4n+1)/n)$  is related to $M_{D_n}(4n+1)$ by a sequence of Kirby-Rolfsen-Rourke
  moves. These moves are well known to preserve 3-manifolds up to homeomorphism. See for example
  \cite[Chapter 9]{Rolfsen}. The  particular sequence of moves required in this case  is shown  in Figure \ref{fig: 3,-n
  twist}. 
  
  To describe the moves required in more detail, let us recall that, as is customary in the
  Kirby-Rolfsen-Rourke calculus, one indicates the 3-manifold $M$ obtained by Dehn-filling along a
  link $L$ in $S^3$ by a diagram of $L$ with each component labeled by the surgery slope used for
  the component. For components where the surgery coefficient is $1/0$, we will omit the label (such
  surgery is called $\infty$-surgery and it produces back $S^3$). Such a diagram is called a
  {\emph{surgery diagram}} of $M$.
  
  \begin{figure}
    \centering
    \includegraphics*[scale=0.69]{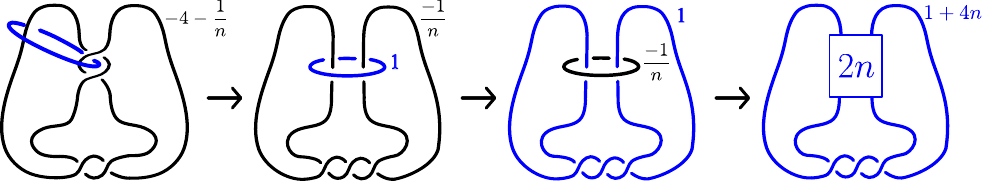}
    \caption{Kirby-Rolfsen-Rourke calculus moves showing that $M_{4_1}((-4n-1)/n)$ is homeomorphic
    to $M_{D_n}(4n+1)$.} 
    \label{fig: 3,-n twist}
  \end{figure}
  
    \begin{enumerate} [(i)]
    
    \item The 3-manifold $M_{4_1}(-(4n+1)/n)$ has  a surgery diagram consisting of a knot diagram
    for  $4_1$ labeled by $(-4n-1)/n=-4-1/n$. In the leftmost panel of  Figure \ref{fig: 3,-n
    twist}, we have inserted an unknotted component $U$, shown in blue, on which the surgery
    coefficient is $1/0=\infty$ and such that it has linking number $\pm 2$ with the figure-eight
    knot component. That is $| {\rm lk}(U, 4_1)|=2$. A $-1$-twist  along $U$ produces produces the
    second surgery diagram in the sequence. Note that the surgery coefficient of the component
    corresponding to $4_1$ has now changed to $-(4n+1)/n+({\rm lk}(U, 4_1))^2=-1/n$. This operation
    is also known as a \emph{blow up}.

    \item The surgery diagram shown in the  third panel of Figure \ref{fig: 3,-n twist} is obtained
    by that of the second panel by ambient isotopy that interchanges the two components of the
    underlying link.
    
    \item Finally, performing $n$-twists on the component labelled by $-1/n$  gives the rightmost
    panel of Figure \ref{fig: 3,-n twist}, which represents a surgery diagram of $M_{D_n}(4n+1)$.
    The operation of performing this $(-1/n)$-surgery on an unknotted component is also known as a
    \emph{blow down}.
  
 \end{enumerate} 

 For part $(b)$, a similar sequence of  Kirby-Rolfsen-Rourke calculus moves, shown in Figure
  \ref{fig: 2,-n twist}, in proves that  $M_{4_1}(-1/n)$ is homeomorphic $M_{D_n'}(1)$. Note that
  this time the inserted unknotted component $U$, drawn in blue in the leftmost panel of Figure
  \ref{fig: 2,-n twist}, has zero linking number with $4_1$. That is, $ {\rm lk}(U, 4_1)=0$. In this
  case, the surgery coefficient of the component corresponding to $4_1$ is unchanged under the blow
  up operation since $-1/n+({\rm lk}(U, 4_1))^2=-1/n$.
   \begin{figure}
    \centering
    \includegraphics*[scale=0.85]{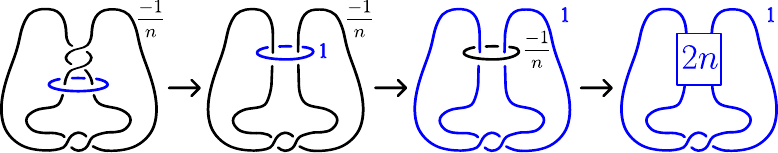}
    \caption{Kirby-Rolfsen-Rourke calculus moves showing that $M_{4_1}(-1/n)$ is homeomorphic to
    $M_{D_n'}(1)$.} 
    \label{fig: 2,-n twist}
  \end{figure}
\end{proof}

We are now ready to give the proof of Theorem \ref{thm: doubleknotintro}:

\begin{proof}[Proof of Theorem \ref{thm: doubleknotintro}]
  By Lemma \ref{lem: twist knots from 4_1}, for any $n\in \Z$, the 3-manifolds $M_n:=M_{D_n}(4n+1)$
  is obtained by $-(4n+1)/n$-surgery along the knot $4_1$. Since $n\neq 0,-1$, by Proposition
  \ref{pro:ex}, the slope $-(4n+1)/n$ is not exceptional for $4_1$. Hence $M_n$ is hyperbolic.
  Similarly, since $M'_n:=M_{D_n'}(1)$ is also obtained by a $-1/n$-surgery along $4_1$, it is
  hyperbolic for $n\neq 0,\pm 1$. By Theorem \ref{thm: Ohtsuki, Wong-Yang q-hyp}, we conclude that
  the manifolds $M_{D_n}(4n+1)$ and $M_{D_n'}(1)$ are $q$-hyperbolic for $n \neq \pm 1$. Hence, part
  $(b)$ of the statement follows.

   Theorem \ref{thm: lTV bounded} implies that the growth rates of the Turaev--Viro invariants of
  the unfilled twist knot complements $M_{D_n}$ and $M_{D_n'}$ are bounded below by the growth rates
  of $M_n$ and $M_n'$ respectively. That is we have
  $$0< lTV(M_n)\leq lTV(M_{D_n})\ \ {\rm and} \ \  0< lTV(M_n')\leq lTV(M_{D_n'}).$$ Hence, by their
  definitions, the double twist knots $D_n$ and $D_n'$ are also $q$-hyperbolic, concluding the proof
  of part $(a)$.
  
  Now we prove part $(c)$: Since $M_n$ and $M_n'$ are hyperbolic 3-manifolds obtained by surgery of
  $4_1$, by Theorem \ref{thm: Ohtsuki, Wong-Yang q-hyp}, $lTV(M_n)={\rm vol}(M_n)$ and
  $lTV(M'_n)={\rm vol}(M'_n)$. Combining these equations with the last displayed inequalities gives
  part $(c)$.

\end{proof}

\medskip

%%%%%%%%%%%%%%%%%%%%%

\section{Non-characterizing slopes and \texorpdfstring{$q$}{q}-hyperbolicity}\label{sec:further
classes} A slope $p/q$ is called \emph{non-characterizing} for a knot $K\subset S^3$ if there is a
knot $K'$ that is not equivalent to $K$ and such that $M_K(p/q)$ is homeomorphic to $M_{K'}(p/q)$.
For the viewpoint of this paper, non-characterizing slopes are useful in the following sense: If we
know that $M_K(p/q)$ is $q$-hyperbolic then, arguing as in the proof Theorem \ref{thm:
doubleknotintro}, we conclude that 
$$0<lTV(M_{K}(p/q))=lTV(M_{K'}(p/q))\leq lTV(M_{K'}),$$ and hence $K'$ is a $q$-hyperbolic knot.

In the articles, \cite{AJOT2013, AJLO2015, AbeTagami}, the authors provide constructions of knots
that admit many non-characterizing slopes.  The techniques of these papers apply to many double
twist knots to conclude that they admit non-characterizing slopes. On the other hand, these knots
can be seen to be $q$-hyperbolic  by Theorem \ref{thm: doubleknotintro}. Using this approach, one
starts with a double twist knot, say $K,$ to which both the techniques of \cite{AJOT2013, AJLO2015,
AbeTagami} and Theorem \ref{thm: doubleknotintro}  apply, and builds a  family of $q$-hyperbolic
knots that have a common surgery with $K$.

To illustrate this, we note that the knot $6_2$ is isotopic to the double twist knot $D(-4, -3)$; we
will write $6_2=D(-4, -3)$. See Section \ref{sec: low crossing tables} for more details. By Theorem
\ref{thm: doubleknotintro}, $M_{6_2}(-7) \cong M_{4_1}(-7/2)$ and $6_2$ is $q$-hyperbolic. We will
use the approach discussed above to prove the following theorem stated in the Introduction:

\begin{customthm}{\ref{thm: 6_2annulustwist}} There is an infinite  set of knots ${\mathcal{K}}$
such that:
  \begin{enumerate}[(a)]
  \item Every knot in ${\mathcal{K}}$ is $q$-hyperbolic.
   \item For every $K\in {\mathcal{K}}$, $M_{K}(-7) $ is homeomorphic to $M_{4_1}(-7/2)$ and it is
  $q$-hyperbolic.
  \item We have
  $$\textit{lTV}(M_{K})\geq {\rm vol}(M_{4_1}(-7/2))\approx 1.649610.$$
    \item No two knots in ${\mathcal{K}}$ are equivalent.
  \end{enumerate}
\end{customthm}

In order to prove Theorem \ref{thm: 6_2annulustwist} and to discuss further applications of the
techniques of \cite{AJOT2013, AJLO2015, AbeTagami} in constructions of $q$-hyperbolic knots, we need
some preparation.

\subsection{Annulus presentations and twists}
We begin by recalling the notion of \emph{annulus presentations} of knots and the operation of
\emph{annulus twists} for knots admitting annulus presentations. The latter operation takes a
surgery presentation along a particular class of knots and returns a different knot which shares a
surgery with the original knot.

\begin{defin}\label{defin:annulus} \rm{We will say that a knot $K \subset S^3$ admits an annulus
presentation if it can be constructed in the following way:
\begin{enumerate}
\item Start with standardly embedded annulus $A \subset \mathbb{R}^2 \cup \{\infty\} \subset S^3$
together with an an unknotted curve $c$ that is disjoint from $A$ that bounds a disc $\Sigma$ whose
interior intersects $\partial A$ twice; once for each component of $\partial A$. Consider $c$ as a
framed knot with framing $\pm 1$. 

\item Consider an embedded band $b:I\times I \rightarrow S^3$ such that 
\begin{enumerate}[(i)]
  \item $b(I\times I) \cap \partial A = b(\partial I \times I)$,
  \item $b(I \times I) \cap$int$A$ consists of ribbon singularities, 
  \item $A\cup b(I\times I)$ is an immersed orientable surface, and
  \item $b(I\times I) \cap c = \emptyset$,
\end{enumerate}
where $I = [0,1]$. See the right hand side panel of Figure \ref{fig: annulus pres} for an
  illustration of an annulus presentation $(A,b,c)$.  
\item Performing the $\pm 1$ surgery on $c$ (i.e blowing down along $c$) transforms the curve
$(\partial A \setminus b(\partial I \times I)) \cup b(I \times \partial I)$ into a knot that is
isotopic to $K$ in $S^3$. \end{enumerate}}
\end{defin}

\begin{figure}
  \centering
  \includegraphics*[scale=0.55]{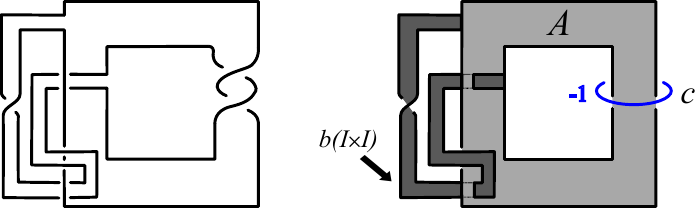}
  \caption{Annulus presentation of the knot $6_2$ in $S^3$.}
    \label{fig: annulus pres}
\end{figure}

\begin{remark}
  We note that the definition of annulus presentation differs slightly across the literature.
  Namely, in \cite{AbeTagami}, the authors use a more general definition of annulus presentation that
  allows the annulus $A$ to be any embedding. They define a \textit{special} annulus presentation
  equivalently to the above definition except that the presentation includes the single full
  crossing (either positive or negative) in the Hopf band resulting from surgery along the $(\pm
  1)$-framed unknotted component $c$. Here we use the definition given by Abe--Jong--Omae--Takeuchi
  \cite{AJOT2013} and Abe--Jong--Luecke--Osoinach \cite{AJLO2015}. Note that in \cite{AJOT2013}, the
  authors use the term \textit{band presentation} rather than annulus presentation.
\end{remark}

To continue, note that given an annulus presentation  $(A,b,c)$, the complement of the annulus $A
\subset \mathbb{R}^2 \cup \{\infty\}$ consists of two disk components $D$ and $D'$. Take $D$ to be the
component corresponding to the finite region in ${\R}^2$ (see the leftmost panel of Figure \ref{fig:
6_2 5_2 annulus}) and assume that $\infty \in D'$.

\begin{defin} \label{defin:simple} \rm{ The annulus presentation $(A,b,c)$ is called \textit{simple}
if we have $b(I \times I) \cap \text{int}D = \emptyset$. }
\end{defin}

The middle panel of Figure \ref{fig: 6_2 5_2 annulus} illustrates  a simple annulus presentation of
the knot $6_2$  while the rightmost panel illustrates a non-simple annulus presentation for the knot
$5_2$.

\begin{figure}
  \centering
  \includegraphics[scale=0.54]{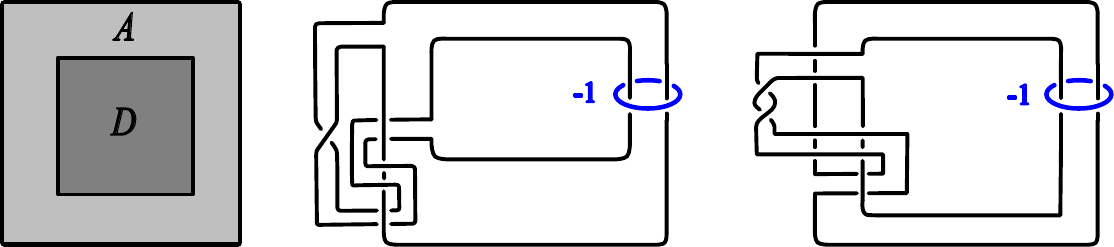}
  \caption{\textbf{Left:} One of the two connected components, $D$, of $\mathbb{R}^2 \cup \{\infty\}
  \setminus \text{int}A$. \textbf{Middle:} Simple annulus presentation of the knot $6_2$.
  \textbf{Right:} Non-simple annulus presentation of the knot $5_2$.}
  \label{fig: 6_2 5_2 annulus}
\end{figure}

\begin{figure}
  \centering
  \includegraphics[scale=0.50]{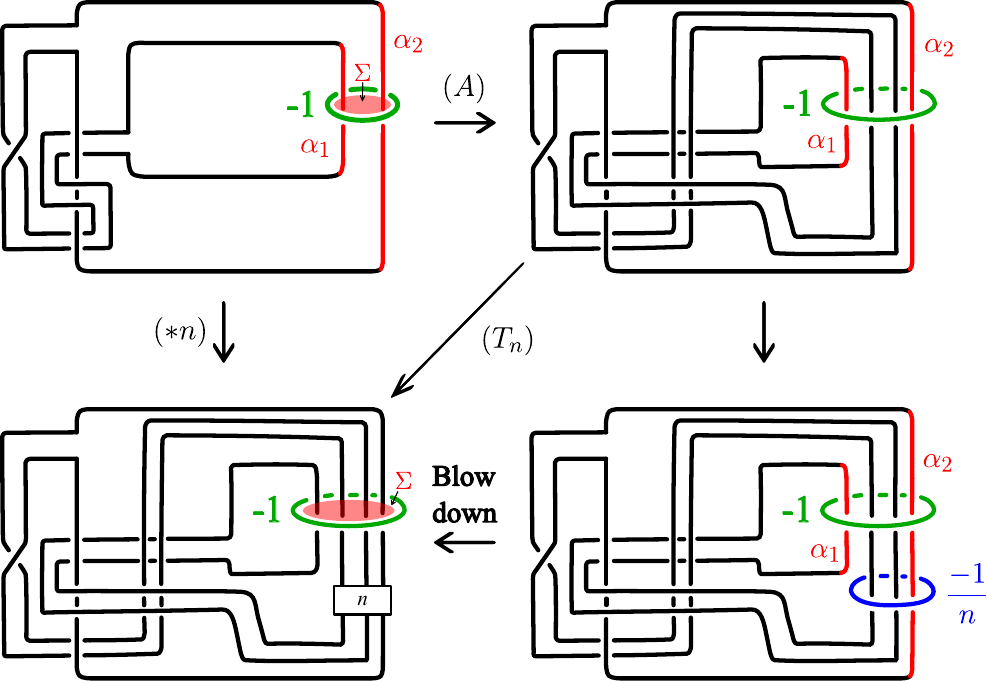}
  \caption{ \textbf{Top row:} Simple annulus presentation of $6_2$ and the annulus twist (A).
   \textbf{Bottom row:} Introduction of $(-1/n)$-framed component and blow down.} 
   \label{fig: ann twist sequence}
\end{figure}

The following lemma of \cite{AJOT2013} gives a family of knots which admit annulus presentations. In
particular, the double twist knots $D_n' = D(2n, -2)$, including those listed in Table \ref{tab: low
crossing m}, satisfy the assumptions of Lemma \ref{lem: unknotting 1}. 

\begin{lem}[\cite{AJOT2013}, Lemma 2.2]\label{lem: unknotting 1} If $K$ is a knot with unknotting
  number one, then $K$ admits an annulus presentation.
\end{lem}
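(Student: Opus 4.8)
The plan is to translate the hypothesis $u(K)=1$ into surgery along a crossing circle and then repackage that datum as the triple $(A,b,c)$ required by Definition~\ref{defin:annulus}. First I would record the standard fact that $u(K)=1$ is equivalent to the existence of an unknotted \emph{crossing circle} $c$ bounding a disk $\Sigma$ (a crossing disk) that meets the unknot $U$ transversely in exactly two points, namely the two strands at a distinguished crossing, such that a single $\pm1$ blow-down along $c$ converts $U$ into $K$. Indeed, fix a diagram of $K$ and a crossing whose change unknots $K$; the circle $c$ encircling the two strands at that crossing, blown down with framing $\pm1$, inserts the full twist realizing the crossing change, so running the move in reverse converts $U$ into $K$. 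This is the same blow-up/blow-down mechanism used in the proof of Lemma~\ref{lem: twist knots from 4_1} and is well known (see \cite[Chapter~9]{Rolfsen}).

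Next I would build the annulus presentation directly from the pair $(U,c)$. The idea is to take the standardly embedded annulus $A$ to be a thin annular band running along $U$ between the two points where $\Sigma$ meets $U$, so that the two components of $\partial A$ are parallel copies of the two strands threading $\Sigma$. The complementary arc of $U$, together with whatever knotting the pair $(U,c)$ carries, is absorbed into the band $b$, whose intersections with $\mathrm{int}\,A$ are arranged to be ribbon singularities. By construction $c$ is unknotted, disjoint from $A$, and bounds $\Sigma$, whose interior meets $\partial A$ once in each component; the framing $\pm1$ on $c$ is chosen to match the sign of the unknotting crossing change. The band curve $(\partial A\setminus b(\partial I\times I))\cup b(I\times\partial I)$ is then isotopic to $U$, so that $U$ is exactly the curve produced before blowing down in step~(3) of Definition~\ref{defin:annulus}.

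Finally I would verify the four band conditions (i)--(iv) of Definition~\ref{defin:annulus} for the constructed $(A,b,c)$ and check that blowing down $c$ sends the band curve $U$ to $K$. The latter holds by the first paragraph, since the $\pm1$ blow-down on $c$ is precisely the crossing change that turns $U$ into $K$. Together these show that $K$ admits an annulus presentation.

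The main obstacle is the geometric bookkeeping in the second step: one must isotope the pair $(U,c)$ into a position in which $A$ is \emph{standardly} embedded and $\Sigma$ meets $\partial A$ in exactly two points, one per component, while confining all of the complexity of $U$ and of how $c$ clasps it to ribbon singularities of $b$ with $A$, so that $A\cup b(I\times I)$ remains an immersed orientable surface and $b(I\times I)\cap c=\emptyset$. Care is also needed to match the framing $\pm1$ of $c$ with the sign of the crossing change so that the blow-down realizes the correct twist. Once this normal form for $(U,c)$ is established, the remaining verifications are routine.
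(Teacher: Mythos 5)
The paper offers no proof of this lemma: it is imported verbatim from Abe--Jong--Omae--Takeuchi \cite{AJOT2013}, so there is no internal argument to compare against. Your first paragraph is correct and is indeed the right starting point: $u(K)=1$ yields an unknot $U$ together with a $(\pm1)$-framed unknotted crossing circle $c$, with ${\rm lk}(c,U)=0$, whose blow-down converts $U$ into $K$.

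The gap is in the second step, which is where the entire content of the lemma lives. First, the construction of $A$ does not parse as written: the components of $\partial A$ are circles, so they cannot be ``parallel copies of the two strands threading $\Sigma$'' (which are arcs). What is actually needed is that each component of $\partial A$ \emph{contains} one of the two strands of $U$ passing through $\Sigma$ (closed up by a short auxiliary arc), while the two long sides $b(I\times\partial I)$ of the band are exactly the two remaining arcs of $U$; if $\partial A$ instead carries push-offs of the strands, then either the curve $(\partial A\setminus b(\partial I\times I))\cup b(I\times\partial I)$ is no longer $U$, or ${\rm int}\,\Sigma$ meets it in four points rather than two, and the blow-down along $c$ no longer effects the crossing change required in condition (3) of Definition \ref{defin:annulus}. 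Second, and more seriously, everything you defer as ``geometric bookkeeping'' is precisely what must be proved: that the two circles obtained by closing up the strands form a two-component unlink cobounding a \emph{standardly embedded} planar annulus disjoint from $c$; that the two long arcs of $U$ cobound an embedded band disjoint from $c$ (this uses that $U$ is unknotted, which your sketch never invokes at this step); that every intersection of that band with ${\rm int}\,A$ can be arranged to be a ribbon singularity (the band passing entirely through the annulus) rather than a clasp; and that $A\cup b(I\times I)$ is orientable, which ultimately rests on ${\rm lk}(c,U)=0$ forcing the two strands to pierce $\Sigma$ with opposite signs. None of these points is argued, so what you have is the correct setup and strategy, but not a proof.
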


Abe and Tagami \cite{AbeTagami} give a tabulation of all prime knots with $8$ or fewer crossings
admitting an annulus presentation (see Table 1 of \cite{AbeTagami}).

We now define an operation known as an \emph{$n$-fold annulus twist}. We refer the reader to
\cite{AJLO2015,AJOT2013} for further details of this construction. This operation can be  applied to
a knot $K$ with an annulus presentation $(A,b,c)$, and surgery slope given by an integer $n\in \Z$,
to produce another knot $K'$, with annulus presentation $(A,b',c)$, so that the 3-manifold
$M_K(n)\cong M_{K'}(n)$.

\begin{definition}\label{def: n-fold annulus twist} Let $K$ be a knot with annulus presentation
  $(A,b,c)$ with $\partial A = l_1 \sqcup l_2$, and let $n\in \mathbb{Z}$. We define the
  \emph{$n$-fold annulus twist operation}, denoted by $(*n)$, as follows:
  \begin{enumerate}
    \item First apply an annulus twist $(A)$. This involves performing Dehn surgery on $l_1$ and
    $l_2$ along slopes $1$ and $-1$, respectively, and gives rise to a homeomorphism of the
    complement $M_{l_1 \sqcup l_2}$. An example is illustrated in the top row of Figure \ref{fig:
    ann twist sequence}. Note that in the leftmost panel we have two vertical arcs $\alpha_1,
    \alpha_2 \subset \partial A$ that intersect the interior of a disk $\Sigma$ bounded by the $-1$
    framed unknot $c$ exactly twice. After the operation (A) is applied, the disk $\Sigma$ is
    intersected by four vertical arcs, two of which are between $\alpha_1$ and $\alpha_2$.
   \item Apply the operation $(T_n)$, which is defined by 
    \begin{enumerate}[(i)]
   \item adding another  $(-1/n)$-framed unknot  engulfing all but $\alpha_1$ of the
    vertical arcs going through $c$. An illustration is given in the rightmost panel of
     the second row of Figure \ref{fig: ann twist sequence}.
 \item blowing down along the $(-1/n)$-framed component, as shown in the leftmost panel
   of the second row of Figure \ref{fig: ann twist sequence}.
   \end{enumerate}
  \end{enumerate}
\end{definition}

An important property of the $n$-fold annulus twist operation is the following result of
Abe-Jong-Luecke-Osoinach \cite{AJLO2015}. 

\begin{thm}[\cite{AJLO2015}, Theorem 3.10]\label{thm: ann pres same mfd} Let $K$ be a knot with an
annulus presentation and $K'$ be the knot obtained by the $n$-fold twist $(*n)$. Then the 3-manifold
$M_K(n)$ is homeomorphic to $M_{K'}(n)$. That is we have
  \[M_K(n) \cong M_{K'}(n). \]
\end{thm}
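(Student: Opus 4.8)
The plan is to realize the entire $n$-fold annulus twist $(*n)$ as a finite sequence of Kirby--Rolfsen--Rourke calculus moves applied to a surgery diagram, exactly in the spirit of the proof of Lemma \ref{lem: twist knots from 4_1}. Since $K$ is recovered from its annulus presentation $(A,b,c)$ by blowing down the $(\pm 1)$-framed unknot $c$, I would first write down a surgery diagram for $M_K(n)$ whose underlying link is the curve $\widehat{K}=(\partial A\setminus b(\partial I\times I))\cup b(I\times\partial I)$ together with $c$, with the framing on $\widehat{K}$ chosen so that blowing down $c$ returns $(K,n)$. The goal is then to transform this diagram, through moves that each preserve the ambient $S^3$ and hence the resulting closed $3$-manifold up to homeomorphism, into a surgery diagram whose link is $(K',n)$, where $K'$ is the knot produced by $(*n)$.

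Second, I would analyze the annulus twist $(A)$. Writing $\partial A=l_1\sqcup l_2$, this operation performs $(+1)$-surgery on $l_1$ and $(-1)$-surgery on $l_2$. Because $l_1$ and $l_2$ cobound the embedded annulus $A\subset S^3$, these opposite-framed surgeries are mutually cancelling in homology and together realize a self-homeomorphism of the complement $M_{l_1\sqcup l_2}$; in particular, surgery on $l_1\sqcup l_2$ with these coefficients returns $S^3$. The content of this step is to identify the induced homeomorphism explicitly as the annulus (Dehn) twist supported in a neighborhood $A\times I$, and to track its effect on everything passing through $A$: the two vertical arcs $\alpha_1,\alpha_2$ meeting the disk $\Sigma$ bounded by $c$ become four arcs, as indicated in the top row of Figure \ref{fig: ann twist sequence}, and the band $b$ is carried to a new band $b'$. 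The ribbon conditions in Definition \ref{defin:annulus} (that $A\cup b(I\times I)$ is an immersed orientable surface) are what guarantee that the twisted band still closes up to an embedded knot $K'$ with annulus presentation $(A,b',c)$.

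Third, I would carry out the operation $(T_n)$ by a blow-up followed by a blow-down. One introduces a $(-1/n)$-framed unknot $U$ engulfing all but one of the arcs passing through $\Sigma$, which is a blow-up and so leaves the $3$-manifold unchanged; blowing $U$ down then inserts the twists on the engulfed strands and, since $U$ is unknotted, again returns $S^3$. The crucial bookkeeping is the framing computation: the surgery coefficient on the component becoming $K'$ changes by a controlled multiple of $({\rm lk}(U,\cdot))^2$ under the blow-down, exactly as the coefficient of the figure-eight component changed by $({\rm lk}(U,4_1))^2$ in Lemma \ref{lem: twist knots from 4_1}. Arranging $U$ so that its algebraic linking number with the new knot vanishes --- as in part $(b)$ of that lemma, where ${\rm lk}(U,4_1)=0$ left the coefficient untouched --- keeps the coefficient equal to $n$. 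Assembling the three steps produces a surgery diagram for $M_{K'}(n)$ obtained from that of $M_K(n)$ by homeomorphism-preserving moves, which yields $M_K(n)\cong M_{K'}(n)$.

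The main obstacle I expect is the framing and linking-number bookkeeping: one must verify that, after $(A)$ redistributes the arcs through $\Sigma$ and after the $(T_n)$ blow-down inserts its twists, the net change to the surgery coefficient on the new knot is precisely zero, so that the coefficient remains $n$ rather than some $n\pm{\rm lk}^2$. A secondary point requiring care is to confirm that the abstract self-homeomorphism of $M_{l_1\sqcup l_2}$ furnished by $(A)$ matches the concrete diagrammatic twist shown in Figure \ref{fig: ann twist sequence}, and that $b'$ together with $A$ continues to satisfy the conditions of Definition \ref{defin:annulus}, so that $K'$ is genuinely a knot rather than a singular or disconnected curve.
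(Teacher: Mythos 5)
Your steps 1 and 2 match the paper's strategy (which follows \cite{AJLO2015}): write $M_K(n)$ as a surgery diagram, and realize the move $(A)$ by inserting a cancelling pair of $(\pm 1)$-framed components along $\partial A$, which is manifold-preserving because surgery on the two boundary curves of an embedded annulus with opposite annulus-framings returns $S^3$. The genuine gap is in your step 3, and it is exactly the crux of the theorem. Introducing a $(-1/n)$-framed unknot $U$ around strands of a surgery diagram is \emph{not} a blow-up and does \emph{not} preserve the resulting $3$-manifold: the inverse of a blow-down requires simultaneously inserting the compensating full twists on the strands passing through $U$ and adjusting framings by $n\,({\rm lk}(U,\cdot))^2$. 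If instead you perform a genuine blow-up and then immediately blow the same unknot back down in the same position, the two moves cancel and the diagram is unchanged, so you recover $K$ with framing $n$, not $K'$. Read literally, your step 3 contains an invalid move; read charitably as valid Kirby moves, it produces nothing. Either way it does not yield $M_K(n)\cong M_{K'}(n)$.

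The missing idea, which is how the paper's proof (Figure~\ref{fig: homeoanntwist} and the steps (i)--(iv) accompanying it) works, is that the $(-1/n)$-framed unknot is \emph{not} freshly inserted at the $(T_n)$ stage. It is introduced at the very beginning by a genuine blow-up around a single strand of the knot, which changes the surgery coefficient from $n$ to $0$ (this requires ${\rm lk}(U,\cdot)=\pm 1$, so that the coefficient changes by $n$); it is then \emph{handle-slid} across the other components of the diagram, including the $(\pm 1)$-framed pair realizing $(A)$, and these slides are what reposition it so that it engulfs ``all but $\alpha_1$'' of the arcs through $\Sigma$ as in the definition of $(T_n)$; only then is it blown down, which restores the coefficient from $0$ back to $n$ and inserts the $n$ full twists that create $K'$. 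Because the unknot sits in different positions at blow-up and at blow-down, the composite is not the identity, yet every intermediate move preserves the manifold. Note also that your framing bookkeeping contradicts this mechanism: you propose arranging ${\rm lk}(U,\cdot)=0$ so the coefficient ``stays $n$,'' but in the correct argument the coefficient must change $n\to 0\to n$, which forces linking number $\pm 1$ at both ends; with linking number $0$ the blow-down could never compensate for the (invalid) bare insertion of $U$.
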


A proof of Theorem \ref{thm: ann pres same mfd} for the knot $K=6_2$ is given in Figure \ref{fig:
homeoanntwist}, which is summarized as follows:
\begin{enumerate} [(i)]

\item First we perform a blow up operation, which changes the framing of the $n$-framed component to $0$ and introduces
a $(-1/n)$-framed component as shown in the middle panel of the first row.

\item After introducing $1$ and $-1$-framed components (in red) in the right most panel of the first
row, we slide the $1$-framed component across the $-1$-framed component to get the right most panel
of the second row. Note that the $1$ and $-1$-framed components (in red) in the right most panel of
the second row correspond to the boundary components of the annulus $A$ and give the surgery
description for the move $(A)$.

 \item Next we slide the $(-1/n)$-framed component across both the $1$-framed component (in red) and
the $-1$-framed component (in green) to get the left most panel of the third row. 

\item To get from the left most panel to the middle panel of the third row, we perform surgery on
the red $1$ and $-1$-framed components, corresponding to the annulus twist $(A)$, and isotope the
$(-1/n)$-framed component. Finally, we blow down, which introduces $n$ full positive twists and
changes the framing from $0$ to $n$. This isotopy and blow down correspond to the operation $(T_n)$.
Hence the sequence of operations in the third row contains an $(A)$ move and a $(T_n)$ move.
\end{enumerate}
\begin{figure}
  \centering
  \includegraphics[scale=0.55]{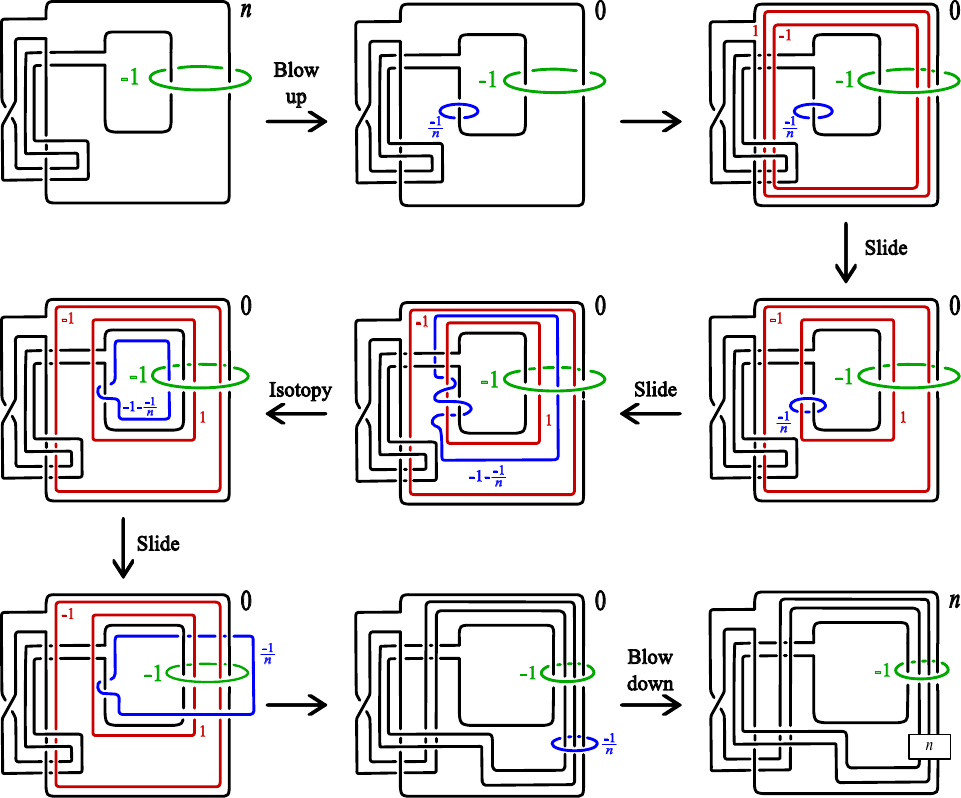}
  \caption{A proof that $M_{K}(n) \cong M_{K'}(n)$ for $K=6_2$ starting with an annulus presentation
  of $K$ in the top-left and ending with an annulus presentation of $K' = (*n)K$ in the
  bottom-right.}
  \label{fig: homeoanntwist}
\end{figure}

\begin{remark}\label{complicated} If a knot $K$ admits an annulus presentation and a knot $K'$ is
    obtained from $K$ by an $n$-fold annulus twist $(*n)$, then, in general, $K'$ can be far more
    complicated than $K$. However, if $K$ admits a simple annulus presentation, then the annulus
    presentation of $K'$ is also simple and is not quite as complicated.
\end{remark}

Since the $n$-fold annulus twist operation on a knot produces another knot which also admits an
annulus presentation, this operation can be iterated. Indeed, Theorem \ref{thm: ann pres same mfd}
implies that for any knot $K$ which admits an annulus presentation and any integer $n\neq 0$, there
is a set ${\mathcal{K}}=\{K_i\}_{i\in \N}$ of knots such that

$$ ....M_{K_i}(n) \cong M_{K_{i-1}}(n)\cong\cdots \cong M_{K_1}(n) \cong M_{K}(n).$$

In general, we don't know that  the knots $K_i$ are necessarily distinct, so  the set $\mathcal{K}$
may be finite. However, we will see in the proof of Theorem \ref{thm: 6_2annulustwist} that in the
case of $6_2$, iterating the twist operation produces an infinite sequence of mutually distinct
knots.

\subsection{Applications to \texorpdfstring{$q$}{q}-hyperbolicity} 
In order to prove Theorem \ref{thm: 6_2annulustwist}, we recall some definitions from Section 3.3.1
of \cite{AJLO2015}. There the authors use the surgery description of the infinite cyclic covering
$\Tilde{E}(K)$ of the exterior $E(K)$ of a knot $K$ to distinguish knots obtained by applying the
operation $(*n)$ iteratively, provided that the annulus presentation of the knot to begin with is
``good'' in the sense of Definition \ref{def: goodAP} below.

Let $K$ be a knot with a simple annulus presentation $(A,b,c)$. If we ignore the $(-1)$-framed loop
$c$, the knot $U:=(\partial A \setminus b(\partial I \times I)) \cup b(I \times \partial I)$ is
trivial in $S^3$. Consider the link $U\cup c$ in $S^3$. The component $U$ bounds an immersed disk with ribbon singularities
while the component $c$ bounds an embedded disk
$\Sigma$ whose interior is pierced twice by $U$.
We may isotope $U\cup c$ so that the immersed disk bounded by $U$ becomes an embedded
 flat disk, denoted by $D$, contained
in  $\mathbb{R}^2\subset (\mathbb{R}^2 \cup \{\infty\})$. This isotopy, which we denote by $\phi$, will gradually
 shrink the band $b(I\times I)$ till it is eliminated, and will introduce ribbon singularities between the  (isotopic image of $c$)
 and the interior of $D$.  Abusing our notation, we will continue to denote the image of $c$ under $\phi$ by $c$, and
 we will continue to use $\Sigma$ to denote the image  
 of $\Sigma$ under $\phi$. Note that after  the isotopy $\Sigma$ may become an immersed disk.
 We also note that in Figure \ref{fig: isotopygoodAP6_2} (middle panel)
 the disk $\Sigma$ after the isotopy is not entirely depicted shaded; we only indicate the shading in a small portion
  we wish to highlight in Figure \ref{fig: isotopygoodAP6_2_twisted} below where we show the ribbon singularities of a disk
  $\Sigma$ after isotopy of  $U\cup c$  that makes $U$ bound a flat disk $D$.
 
 Fix orientations on $U$ and $c$ and cut the complement of $U$ after the isotopy in
$S^3$ along the flat disk $D$. This gives a solid cylinder $D\times [-1, 1]$. We will denote the two copies of $D$
resulting from this cutting by $D_{-1}$ and $D_1$. The cutting separates the oriented loop $c$ (after the isotopy) into
a set $\mathcal{A}$ oriented arcs with endpoints on $D_{\pm 1}$, and the endpoints of each  arc
$\alpha \in \mathcal{A}$ may be labelled by $``+"$ (resp. ``$-$") according to whether the algebraic
intersection number of $\alpha$ with the disk $D$ ilies on is positive (resp. negative). This
categorizes $\alpha$ as  one of four types: $(++), (--), (+-), \rm{ and } (-+)$. An illustration of
the process for the $6_2$ knot is shown in Figure \ref{fig: isotopygoodAP6_2}. We refer the reader
to \cite[Section 3.3.1]{AJLO2015} for further details of this construction. 
\smallskip

We need the following definition of \cite{AJLO2015}.
\begin{figure}
  \includegraphics[scale=0.6]{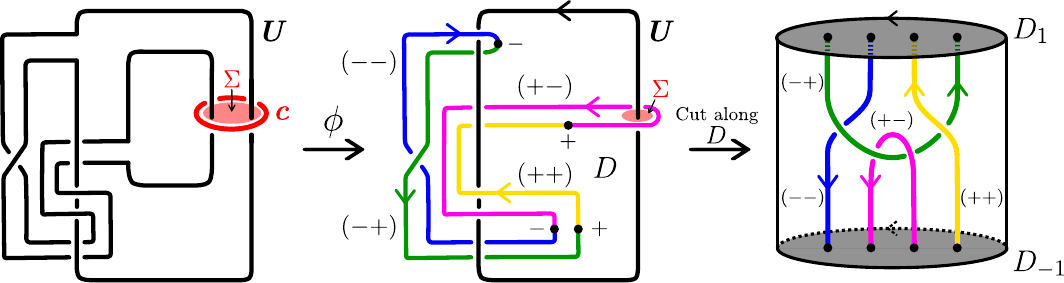}
  \caption{The isotopy $\phi$ applied to the simple annulus presentation of $6_2$. In the collar $D
           \times [-1,1]$, the $(+-)$ arc (in pink) and the $(-+)$ arc (in green) have linking
           number $-1$ relative to $D_{-1}\sqcup D_1$.}
  \label{fig: isotopygoodAP6_2}
\end{figure}

\begin{definition}(\cite[Definition 3.14]{AJLO2015})\label{def: goodAP} A simple annulus
  presentation $(A,b,c)$ is \textit{good} if $b(I \times \partial I) \cap int A \neq \emptyset$ and
  the set of arcs $\mathcal{A}$ in $D \times [-1,1]$ obtained by cutting along $D$ satisfies the
  following up to isotopy.
  \begin{enumerate}[(1)]
    \item $\mathcal{A}$ contains exactly one $(+-)$ arc and exactly one $(-+)$ arc, and the linking
    number of these arcs $\rm{rel}(D_{-1} \sqcup D_1)$ is $\pm 1$. 
    \item For $\alpha \in \mathcal{A}$, if $\alpha \cap \rm{int}\Sigma \neq \emptyset$, then
    $\alpha$ is of type $(++)$ (resp. $(--)$) and the sign of each intersection point in $\alpha
    \cap \Sigma$ is $+$ (resp. $-$). Note that here we refer to the immersed  $\Sigma$ after the isotopy $\phi$,
    and so the arcs $\mathcal{A}$ intersect $\rm{int}\Sigma$.

  \end{enumerate}
\end{definition}

\begin{figure}
  \includegraphics[scale=0.6]{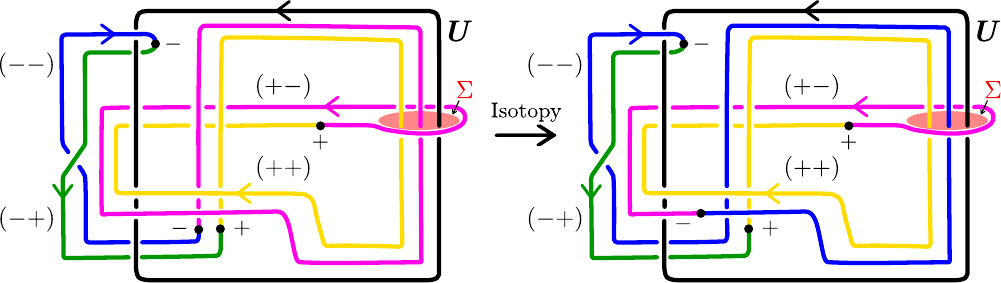}
  \caption{\textbf{Left:} The isotopy $\phi$ applied to the simple annulus presentation of $(A)
  6_2$. In this form, Part $(2)$ of Definition \ref{def: goodAP} fails. \textbf{Right:} Good annulus
  presentation of $(A)6_2$.}
  \label{fig: isotopygoodAP6_2_twisted}
\end{figure}
To illustrate and motivate Definition \ref{def: goodAP}, we apply the isotopy $\phi$ shown in Figure
\ref{fig: isotopygoodAP6_2} for the simple annulus presentation of $6_2$ to the simple annulus
presentation obtained by applying the annulus twist $(A)$ shown in the top row of Figure \ref{fig:
ann twist sequence}. The left panel of Figure \ref{fig: isotopygoodAP6_2_twisted} shows the result
of applying $\phi$ to $(A)6_2$. However, since there is a $(+-)$ arc intersecting the interior of
$\Sigma$, it does not satisfy Part $(2)$ of Definition \ref{def: goodAP}.

To remedy this, we apply an isotopy to move the $(-)$ intersection point between the disk $D$ and
the $(+-)$ and $(--)$ arcs, as shown in the right panel of Figure \ref{fig:
isotopygoodAP6_2_twisted}. This isotopy moves this intersection through $\Sigma$ and results in a
diagram satisfying Part $(2)$ of Definition \ref{def: goodAP}. In particular, since the only arcs
intersecting $\rm{int}\Sigma$ are of type $(++)$ and $(--)$, the diagram in the right panel of
Figure \ref{fig: isotopygoodAP6_2_twisted} corresponds to a good annulus presentation. After this
isotopy, further applications of the annulus twist $(A)$ leave the $(+-)$ and $(-+)$ arcs fixed
since they are now disjoint from $\rm{int}\Sigma$.

The importance of having a good annulus presentation for a knot $K$ lies in the fact that, as shown
in \cite{AJLO2015}, the number of intersection points between $(++)$ arcs and $\rm{int}\Sigma$
determines the degree of its Alexander polynomial. As Figure \ref{fig: isotopygoodAP6_2_twisted}
illustrates, each annulus twist increases the number of such intersections with $\rm{int}\Sigma$,
hence increasing the degree of the Alexander polynomial.

The following lemma of \cite{AJLO2015} will be used in the proof of Theorem \ref{thm:
6_2annulustwist}. 

\begin{lem}[\cite{AJLO2015}, Lemma 3.12]\label{lem: goodAPdifferentAlex} Let $n\in \mathbb{Z}$ and
  suppose the knot $K$ admits a good annulus presentation. Let $K'$ be the knot obtained by applying
  the operation $(*n)$ to $K$. Then, we have the following:
  \begin{enumerate}[(a)]
    \item The knot $K'$ also admits a good annulus presentation.
    \item If $\Delta_K(t)$ and  $\Delta_{K'}(t)$ denote the  Alexander polynomials of $K$ and $K'$,
    respectively, then $\rm{deg} \Delta_K(t) < \rm{deg} \Delta_{K'}(t)$.
  \end{enumerate}
\end{lem}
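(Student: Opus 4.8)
The plan is to follow the strategy of \cite{AJLO2015} and track the arc system of the good annulus presentation under the operation $(*n)$, and then to read off the Alexander polynomial from the induced surgery description of the infinite cyclic cover $\Tilde{E}(K)$. Part $(a)$ is essentially a bookkeeping statement about how the defining data of Definition \ref{def: goodAP} transform, while the degree estimate of part $(b)$ is where the real content lies.

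For part $(a)$, I would first recall that $(*n)$ is the composition of the annulus twist $(A)$ with the move $(T_n)$ of Definition \ref{def: n-fold annulus twist}. The crucial observation, already illustrated in Figure \ref{fig: isotopygoodAP6_2_twisted}, is that once a presentation is good, the unique $(+-)$ arc and the unique $(-+)$ arc are disjoint from $\mathrm{int}\,\Sigma$. Since both $(A)$ and $(T_n)$ only modify the strands passing through $\Sigma$, these two arcs---together with their $\pm 1$ linking number relative to $D_{-1}\sqcup D_1$---are left unchanged, so condition $(1)$ of Definition \ref{def: goodAP} persists. For condition $(2)$ I would check that the new strands created by the twist and drawn through $\Sigma$ are all of type $(++)$ (resp. $(--)$) and carry the sign dictated by the framing of $c$; this follows because the operation inserts parallel copies of an existing $(++)$ (resp. $(--)$) arc, each picking up the same intersection sign with $\Sigma$. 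This simultaneously shows that $K'$ again admits a good annulus presentation.

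For part $(b)$, the plan is to build a presentation matrix for the Alexander module $H_1(\Tilde{E}(K);\Z)$ over $\Lambda=\Z[t^{\pm 1}]$ directly from the lifted surgery description in the collar $D\times[-1,1]$. Cutting the complement of $U$ along $D$ turns the loop $c$ into the arc system $\mathcal{A}$, and lifting to the infinite cyclic cover replaces the integral linking matrix of these arcs by a matrix over $\Lambda$, in which each passage of an arc between $D_{-1}$ and $D_1$ contributes a factor of $t^{\pm 1}$. As emphasized in the paragraph preceding the lemma, the degree of $\Delta_K(t)$ is governed by the number of intersection points of the $(++)$ arcs with $\mathrm{int}\,\Sigma$: these are precisely the entries that produce the highest powers of $t$, while the $(+-)/(-+)$ arcs contribute only the distinguished $\pm 1$ block guaranteed by condition $(1)$. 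Computing the Alexander polynomial as the determinant of this matrix then expresses $\deg\Delta_K$ as a strictly increasing function of this intersection count.

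Finally, I would combine the two parts: by the analysis in $(a)$ the operation $(*n)$ strictly increases the number of $(++)$-arc intersections with $\mathrm{int}\,\Sigma$ (each twist threads additional strands through $\Sigma$, as in Figure \ref{fig: isotopygoodAP6_2_twisted}), and hence strictly increases $\deg\Delta_{K'}$ over $\deg\Delta_{K}$. The main obstacle is the careful linear-algebra bookkeeping in part $(b)$: one must set up the $\Lambda$-valued Alexander matrix from the surgery diagram so that the top-degree coefficient is visibly nonzero and is controlled solely by the $(++)$ intersections, ensuring that no cancellation coming from the $(+-)/(-+)$ block lowers the degree. Establishing this nonvanishing of the leading term, and thereby making the dependence of $\deg\Delta_K$ on the intersection count precise, is the step that requires the full force of the good presentation hypothesis.
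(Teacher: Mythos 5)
First, a structural point: the paper does not prove this lemma at all --- it is imported verbatim as Lemma~3.12 of \cite{AJLO2015}, and the surrounding text only gives an expository discussion (via Figures \ref{fig: isotopygoodAP6_2} and \ref{fig: isotopygoodAP6_2_twisted}) of why goodness matters. So there is no internal proof to compare against; the relevant comparison is with the strategy the paper attributes to \cite{AJLO2015}, namely reading $\deg \Delta_K(t)$ off a surgery description of the infinite cyclic cover $\Tilde{E}(K)$, with the degree controlled by the number of intersections of $(++)$ arcs with $\mathrm{int}\,\Sigma$. Your outline is pointed in exactly that direction, so the plan is the right one.

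As a proof, however, it has two genuine gaps. In part (a) you claim the $(+-)$ and $(-+)$ arcs are ``left unchanged'' by $(A)$ and $(T_n)$ because they are disjoint from $\mathrm{int}\,\Sigma$. That is too quick: the annulus twist acts on everything running through the annulus $A$, not just on strands through $\Sigma$, and the paper's own worked example contradicts your invariance claim --- the left panel of Figure \ref{fig: isotopygoodAP6_2_twisted} shows that after one application of $(A)$ to the good presentation of $6_2$, condition (2) of Definition \ref{def: goodAP} \emph{fails} (a $(+-)$ arc meets $\mathrm{int}\,\Sigma$), and a further isotopy pushing an intersection point through $\Sigma$ is needed to restore goodness. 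A correct proof of (a) must exhibit such a corrective isotopy in general, not merely observe that nothing moves. In part (b) you correctly isolate the essential step --- that the top-degree coefficient of the determinant over $\Z[t^{\pm 1}]$ is nonzero, is governed solely by the $(++)$ intersections, and suffers no cancellation from the $(+-)/(-+)$ block --- but then you explicitly defer it as ``the main obstacle.'' That nonvanishing statement (which is also what underlies the monicity assertion recorded in Remark \ref{rem: goodAPMonic}) is precisely where the content of the lemma lives; without it you have an outline of the argument of \cite{AJLO2015}, not a proof of either (a) or (b).
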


\begin{remark}\label{rem: goodAPMonic} As shown in \cite{AJLO2015},  if a knot $K$ admits a good
 annulus presentation, then its Alexander polynomial $\Delta_K(t)$ is monic. 
\end{remark}

We may now prove Theorem \ref{thm: 6_2annulustwist}.

\begin{proof}[Proof of Theorem \ref{thm: 6_2annulustwist}] 
  As noted earlier the knot $K_0:=6_2$ admits a simple annulus presentation. We will consider $K_0$
with framing $-7$ and apply the sequence of moves in Figure \ref{fig: homeoanntwist} to obtain a
knot $K_1$  with simple annulus presentation and such that $M_{K_1}(-7) \cong M_{6_2}(-7)\cong
M_{4_1}(-7/2)$. See Theorem \ref{thm: ann pres same mfd}. Since as discussed earlier $M_{6_2}(-7)$
is $q$-hyperbolic, we obtain that $K_1$ is $q$-hyperbolic and $M_{K_1}(-7) $ is $q$-hyperbolic. Now
we can repeat the process for the knot $K_1$, and apply Theorem \ref{thm: ann pres same mfd} again
to obtain a knot $K_2$ with simple annulus presentation and such that $M_{K_2}(-7) \cong
M_{K_1}(-7)$. Inductively, we create a set  ${\mathcal{K}}=\{K_i\}_{i\in \N}$ of knots with simple
annulus presentations such that
$$ ....M_{K_i}(-7) \cong M_{K_{i-1}}(-7)\cong\cdots \cong M_{K_1}(-7)\cong M_{6_2}(-7) \cong
M_{4_1}(-7/2).$$ By construction, each $K_i$ and $M_{K_i}(-7)$ are $q$-hyperbolic, hence the
collection ${\mathcal{K}}$ satisfies parts $(a)$-$(b)$ of the statement of the theorem.

By Theorem \ref{thm: Ohtsuki, Wong-Yang q-hyp}, $1.649610\approx {\rm
  vol}(M_{4_1}(-7/2))=\textit{lTV}(M_{4_1}(-7/2))$. Combining this with part $(b)$ and Theorem
  \ref{thm: lTV bounded}, we get

  $$\textit{lTV}(M_{K_i})\geq \textit{lTV}(M_{4_1}(-7/2))= {\rm vol}(M_{4_1}(-7/2))\approx
  1.649610,$$ obtaining part $(c)$ of the theorem statement.

Next we claim that the knots $K_i\in {\mathcal{K}}$ are distinct. Let $U$ be the trivial knot in
$S^3$ obtained by ignoring the $(-1)$-framed loop $c$ in the simple annulus presentation of $K_0$
(see the middle panel of Figure \ref{fig: 6_2 5_2 annulus}). Let $D$ be the disk bounded by $U$, and
let $\Sigma$ be the disk bounded by $c$. Figure \ref{fig: isotopygoodAP6_2} gives the isotopy that
flattens $D$ so that it is contained in $\mathbb{R}^2 \subset (\mathbb{R}^2 \cup \{\infty\})$. Let
$D_{-1}=D \times \{-1\}$ and  $D_1=D^2 \times \{1\}$ be copies of $D$ in the bundle $D^2 \times
[-1,1]$ obtained by cutting along $D$. Note that the resulting set of oriented arcs $\mathcal{A}$
shown in the right panel of Figure \ref{fig: isotopygoodAP6_2} contains exactly one $(+-)$ arc and
exactly one $(-+)$ arc. Relative to $D_{-1} \sqcup D_1 \subset D \times [-1,1]$, the $(+-)$ arc (in
pink) links with the $(-+)$ arc (in green) with linking number $-1$. Moreover, $\alpha \cap \Sigma =
\emptyset$ for any arc $\alpha \in \mathcal{A}$. By Definition \ref{def: goodAP}, $K_0$ admits a
good annulus presentation.

By Lemma \ref{lem: goodAPdifferentAlex}, every $K_i \in \mathcal{K}$ admits a good annulus
presentation, and the Alexander polynomials of this family satisfy
$$\rm{deg}\Delta_{K_0}(t) < \rm{deg}\Delta_{K_1}(t) < \cdots <\rm{deg}\Delta_{K_{i-1}}(t)
<\rm{deg}\Delta_{K_i}(t) < \cdots$$ This establishes part $(d)$ of the statement of the theorem.
\end{proof}

The above argument applies to any $q$-hyperbolic knot which admits a good annulus presentation and
an integer $q$-hyperbolic Dehn-filling. For any such knot, analogously to $6_2$, one may apply the
same procedure to produce an infinite family of distinct $q$-hyperbolic knots with homeomorphic
$n$-surgeries. For instance, the method can be applied to the knot $8_{20}$. See section \ref{sec:
low crossing tables} for more details. Hence we have the following:
 
 \begin{thm}Suppose that $K$ is knot that admits a good annulus presentation and such that $M_K(n)$
 is $q$-hyperbolic for some $0\neq n \in Z$. Then there is an infinite family $\{ K_i\}_{i\in \N}$
 of distinct $q$-hyperbolic knots, such that $M_{K_i}(n) \cong M_{K}(n)$, for any ${i\in \N}$.
 
 \end{thm}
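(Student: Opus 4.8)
The plan is to run the exact argument used for $6_2$ in the proof of Theorem \ref{thm: 6_2annulustwist}, now with the generic knot $K$ in place of $6_2$ and the fixed nonzero integer slope $n$ in place of $-7$. First I would set $K_0 := K$ and iterate the $n$-fold annulus twist $(*n)$. Since $K$ admits a good---hence, in particular, simple---annulus presentation, Definition \ref{def: n-fold annulus twist} applies, and each application yields a knot $K_{i+1} = (*n)K_i$ that again admits a good annulus presentation by Lemma \ref{lem: goodAPdifferentAlex}(a). By Theorem \ref{thm: ann pres same mfd}, $M_{K_{i+1}}(n) \cong M_{K_i}(n)$ at every stage, so by induction $M_{K_i}(n) \cong M_K(n)$ for all $i\in \N$, establishing the surgery homeomorphism claim.

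Next I would propagate $q$-hyperbolicity along the family. By hypothesis $M_K(n)$ is $q$-hyperbolic, and since $M_{K_i}(n) \cong M_K(n)$, each filled manifold $M_{K_i}(n)$ is $q$-hyperbolic as well. Exactly as in the proof of Theorem \ref{thm: doubleknotintro}, I would invoke Theorem \ref{thm: lTV bounded}: because $M_{K_i}(n)$ is obtained from the knot complement $M_{K_i}$ by a Dehn-filling, we get $0 < lTV(M_{K_i}(n)) = lTV(M_K(n)) \leq lTV(M_{K_i})$, whence each $K_i$ is $q$-hyperbolic. This yields $q$-hyperbolicity for both the knots $K_i$ and their $n$-fillings.

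To see that the family is genuinely infinite, that is, that the $K_i$ are pairwise inequivalent, I would use the full strength of Lemma \ref{lem: goodAPdifferentAlex}. Part (a) guarantees that the good annulus presentation survives each twist, so the lemma can be applied at every stage; part (b) then produces the strict chain of inequalities $\deg \Delta_{K_0}(t) < \deg \Delta_{K_1}(t) < \cdots < \deg \Delta_{K_i}(t) < \cdots$ among the Alexander polynomial degrees. Since these degrees are all distinct, no two knots in $\{K_i\}_{i\in\N}$ can be equivalent, and the family is infinite.

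I do not anticipate a genuine obstacle: this is a direct generalization of Theorem \ref{thm: 6_2annulustwist}, and every ingredient is already in place. The one point deserving care is that the \emph{good} (rather than merely simple) presentation hypothesis is precisely what is needed to apply Lemma \ref{lem: goodAPdifferentAlex} inductively---without it one could still manufacture an infinite sequence sharing the common $n$-surgery and inheriting $q$-hyperbolicity, but the distinctness of the knots, which is what makes the family truly infinite, would be lost. One should also record that the slope $n$ is held fixed throughout, so that Theorem \ref{thm: ann pres same mfd} applies verbatim at each stage and the homeomorphism type of the filled manifold is preserved.
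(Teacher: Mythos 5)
Your proposal is correct and is exactly the paper's intended argument: the paper proves this theorem by simply observing that the $6_2$ argument (iterate $(*n)$, use Theorem \ref{thm: ann pres same mfd} for the common filling, Theorem \ref{thm: lTV bounded} to propagate $q$-hyperbolicity, and Lemma \ref{lem: goodAPdifferentAlex} for strictly increasing Alexander polynomial degrees, hence distinctness) carries over verbatim to any knot with a good annulus presentation and a $q$-hyperbolic integer filling. Your remark that goodness is needed only for distinctness, while simplicity already suffices for the surgery and $q$-hyperbolicity claims, accurately reflects the paper's discussion surrounding Theorem \ref{thm: doubleknot2seq}.
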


We now turn our attention to the $q$-hyperbolic knots $D_n' = D(2n, -2)$ and their $q$-hyperbolic
fillings $D_n'(1)$. It is known that these double twist knots have unknotting number $1$, hence
admit an annulus presentation by Lemma \ref{lem: unknotting 1}. This gives rise to the following
theorem.

\begin{customthm}{\ref{thm: doubleknot2seq}}For any  $|n| > 1$, let $D'_n:= D(2n, -2)$. There is a
sequence of knots $\{K_n^i\}_{i\in \mathbb{N}}$ such that for any $i\in \N$,
the following:
\begin{enumerate}[(a)]
\item the knot $K_n^i$ is $q$-hyperbolic;
\item the 3-manifold $M_{K^i_n}(1)$ is homeomorphic to $M_{D'_n}(1)$ and it is $q$-hyperbolic.
\end{enumerate}
\end{customthm}

\begin{proof}
Fix $|n| > 1$. The  double twist knot  $D'_n:= D(2n, -2)$ has unknotting number 1 and hence by Lemma
\ref{lem: unknotting 1}, it admits an annulus presentation. Let $K_n^0:= D'_n$. By Theorem \ref{thm:
ann pres same mfd} there is a sequence $\{K_n^i\}_{i \in \mathbb{N}}$ such that
$$ ....M_{K^i_n}(1) \cong M_{K_n^{i-1}}(1)\cong\cdots \cong M_{K_n^1}(1) \cong M_{D'_n}(1).$$ Since
$ D_n'(1)$ is $q$-hyperbolic, by Theorem \ref{thm: doubleknotintro}, each manifold $K_n^i(1)$ is
$q$-hyperbolic, and $(b)$ follows. Furthermore, by Theorem \ref{thm: lTV bounded}, each knot $K_n^i$
is also $q$-hyperbolic, proving part $(a)$.
\end{proof}

\begin{remark}
  We note that the knots considered in Theorem \ref{thm: doubleknot2seq} are obtained by iteratively
  applying $1$-fold annulus twists. While each knot $D_n'$ admits an annulus presentation, they do
  not have monic Alexander polynomials. Indeed, for $n \in \mathbb{Z}$, we have 
  \[
   \Delta_{D_n'}(t) \doteq n t - (2n+1) + n t^{-1},
  \]
  where $\doteq$ is taken up to multiplication by $\pm t^k$. By Remark \ref{rem: goodAPMonic}, for
  $|n|>1$, the knot $D_n'$ does not admit a good annulus presentation. This means the knots
  resulting from $1$-fold annulus twists may not be distinct from $D_n'$, so the resulting sequence
  $\{K_i^n\}_{i \in \N}$ may only be a finite family of distinct $q$-hyperbolic knots.
\end{remark}

\section{An application to quantum representations}\label{sec: AMUsection}

In this section, we discuss an application to a conjecture of Andersen, Masbaum, and Ueno known as
the AMU conjecture \cite{AMU} on quantum representations of mapping class groups of surfaces.

Let $\Sigma_{g,n}$ be a compact oriented surface of genus $g$ with $n$ boundary components. 
Let Mod$(\Sigma_{g,n})$ denote its mapping class group, the group of orientation-preserving
homeomorphisms of $\Sigma_{g,n}$ fixing the boundary pointwise. 
The $SO(3)$-Witten-Reshetikhin-Turaev TQFTs \cite{ReshetikhinTuraev1991,TuraevBook} give families
of finite-dimensional projective representations of Mod$(\Sigma_{g,n})$.

Fix an odd integer $r\geq 3$, which we refer to as the \textit{level}, and let $I_r = \{0, 2, \dots,
r-3\}$ be the set of non-negative even integers less than $r-2$. Fix a primitive $2r$th root of
unity $\zeta_{2r}$ and a coloring $c$ of the components of $\partial \Sigma_{g,n}$ by elements of
$I_r$. Using the skein-theoretic framework of Blanchet, Habegger, Masbaum, and Vogel
\cite{BHMVKauffman}, this gives a finite dimensional $\mathbb{C}$-vector space $RT_r(\Sigma_{g,n},
c)$ and a respresentation 
\[
\rho_{r,c} : \text{Mod}(\Sigma_{g,n}) \rightarrow \mathbb{P}\text{Aut}(RT_r(\Sigma_{g,n}, c)),
\]
called the $SO(3)$-quantum representation of Mod$(\Sigma_{g,n})$ at level $r$. 

The Nielsen-Thurston classification implies that mapping classes $\phi \in$ Mod$(\Sigma_{g,n})$ are
either periodic, reducible, or pseudo-Anosov, and the geometry of the mapping torus $M_{\phi} =
\Sigma_{g,n}\times I / (x \sim \phi(x))$ of $\phi$ is determined by this classification. The AMU
conjecture \cite{AMU} relates the Nielsen-Thurston classification of mapping classes to their
quantum representations. 

\begin{conjecture}[AMU Conjecture, \cite{AMU}]\label{conj: AMU} Let $\phi \in$ Mod$(\Sigma_{g,n})$
  be a pseudo-Anosov mapping class. Then for any big enough level $r$, there is a choice of coloring
  $c$ of the components of $\partial \Sigma_{g,n}$ such that $\rho_{r,c}(\phi)$ has infinite order.
\end{conjecture}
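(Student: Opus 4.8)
The plan is to reduce the AMU conjecture for $\phi$ to the $q$-hyperbolicity of its mapping torus, using the implication of \cite{DKAdvances} recalled in the Introduction. Given a pseudo-Anosov $\phi\in \mathrm{Mod}(\Sigma_{g,n})$, I would first form the mapping torus $M_\phi=\Sigma_{g,n}\times I/(x\sim \phi(x))$. By Thurston's hyperbolization theorem for fibered $3$-manifolds, the pseudo-Anosov hypothesis forces $M_\phi$ to be hyperbolic, so in particular its Gromov norm satisfies $||M_\phi||>0$. The goal is then to deduce that $\rho_{r,c}(\phi)$ has infinite order for some coloring $c$ and all large $r$ from the positivity of $\textit{lTV}(M_\phi)$.

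The mechanism, established in \cite{DKAdvances}, is that the order of $\rho_{r,c}(\phi)$ controls the asymptotic growth of $\mathrm{TV}_r(M_\phi)$. Concretely, by the TQFT gluing axioms the Reshetikhin--Turaev invariant of the mapping torus is computed, up to a framing anomaly, by $\mathrm{Tr}\,\rho_{r,c}(\phi)$, and that of the $k$-fold mapping torus by $\mathrm{Tr}\,\rho_{r,c}(\phi)^k$; via the relation between $\mathrm{TV}_r$ and $|RT_r|^2$ this bounds $\mathrm{TV}_r(M_\phi)$ by a sum over admissible boundary colorings $c$. If, contrary to AMU, $\rho_{r,c}(\phi)$ had finite order for every $c$ and all large $r$, then its eigenvalues would be roots of unity, so the traces $\mathrm{Tr}\,\rho_{r,c}(\phi)^k$ would be bounded by $\dim RT_r(\Sigma_{g,n},c)$, which grows only polynomially in $r$ by the Verlinde formula. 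This would force $\mathrm{TV}_r(M_\phi)$ to grow at most polynomially, i.e.\ $\textit{lTV}(M_\phi)=0$, so $M_\phi$ would fail to be $q$-hyperbolic. The argument thus closes by contraposition: if $M_\phi$ is $q$-hyperbolic, then $\rho_{r,c}(\phi)$ has infinite order for some $c$ and all large $r$, which is AMU.

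The main obstacle is precisely the remaining step, namely certifying that the hyperbolic manifold $M_\phi$ is $q$-hyperbolic, i.e.\ that $\textit{lTV}(M_\phi)>0$. This is exactly the open direction of Conjecture \ref{conj: expgrowthconj}, that $||M||>0$ implies $q$-hyperbolicity, and there is currently no method to verify it for an arbitrary hyperbolic mapping torus; this is why the statement as worded remains a conjecture rather than a theorem. The strategy of the present paper is to circumvent this obstacle for special families rather than an arbitrary $\phi$: it produces pseudo-Anosov monodromies, such as those of the fibered knots $D(2n,-3)$, whose mapping tori are knot complements (or Dehn fillings thereof) whose $q$-hyperbolicity is established directly via Theorem \ref{thm: doubleknotintro} and the surgery results of \cite{OhtsukiFig8, wongYangF8} for the figure-eight knot. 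A proof of the conjecture in full generality would instead require the exponential-growth direction for all hyperbolic $3$-manifolds, for instance a uniform lower bound on $\textit{lTV}(M_\phi)$ in terms of the simplicial volume of the fiber and the dilatation of $\phi$.
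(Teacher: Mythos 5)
The statement you were asked about is the AMU Conjecture itself, which the paper only states and does not (and cannot, with current technology) prove; your proposal correctly recognizes this and identifies the one known mechanism accurately: by \cite[Theorem 1.2]{DKAdvances}, $q$-hyperbolicity of the mapping torus $T_{\phi}$ implies that $\phi$ satisfies the conjecture, so the missing step is exactly the open direction of Conjecture \ref{conj: expgrowthconj}. This is precisely the framework the paper uses for its partial result, Theorem \ref{AMUap}, which verifies the conjecture only for the explicit pseudo-Anosov monodromies $\phi_g$, $\phi'_g$ of the fibered double twist knots $D(3,2g)$, whose complements are shown to be $q$-hyperbolic via Theorem \ref{thm: doubleknotintro}; your trace/Verlinde sketch of why finite order would force polynomial growth of $TV_r(T_{\phi})$ is the internal content of \cite{DKAdvances}, which the paper invokes as a black box rather than reproving.
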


\begin{remark}Note that if a mapping class $\phi \in  \rm{ Mod}(\Sigma_{g,n})$ satisfies the AMU
conjecture, then any mapping class that is a conjugate of a power of $\phi$ also satisfies the
conjecture
\end{remark}

  \begin{figure}
    \centering
    \includegraphics[scale=0.55]{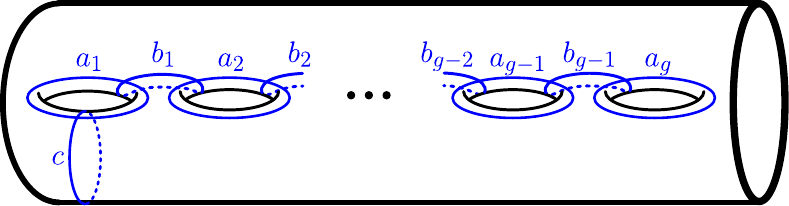}
    \caption{The curves $c, a_1, b_1, \cdots, b_{g-1}, a_g$ on $\Sigma_{g,1}$.}
    \label{fig: generatingcurves}
  \end{figure}
For a simple closed curve $a\subset  \Sigma_{g,n}$ let $\tau_a \in \rm{ Mod}(\Sigma_{g,n})$ denote
the mapping class represented by a Dehn twist along $a$ and $\tau^{-1}_a$ denote the inverse mapping
class.

On the surface of genus $g$ and with one boundary component $\Sigma_{g,1}$, consider the simple
closed curves $c, a_1, b_1, \cdots, b_{g-1}, a_g$ shown in Figure \ref{fig: generatingcurves} and
the mapping classes
$$\phi_g=\tau_c\tau_{a_1} \tau^{-1}_{b_1}\tau_{a_2} \cdots \tau^{-1}_{b_{g-1}}\tau_{a_g}, \ \ {\rm
and} \ \  \phi'_g=\tau^{-1}_c\tau^{-1}_{a_1} \tau_{b_1}\tau^{-1}_{a_2}\cdots
\tau_{b_{g-1}}\tau^{-1}_{a_g}.$$

\begin{thm} \label{AMUap} For  $g\geq 1$, the  mapping classes $\phi_g, \phi'_g\in
\rm{Mod}(\Sigma_{g,1})$ are pseudo-Anosov and they satisfy the AMU conjecture.
\end{thm}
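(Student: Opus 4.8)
The plan is to connect the mapping classes $\phi_g, \phi'_g$ to the fibered double twist knots $D_n = D(2n,-3)$ studied in Theorem \ref{thm: doubleknotintro}, and then invoke the relationship between the AMU conjecture and $q$-hyperbolicity established in \cite{DKAdvances}. Recall from the Introduction that the knots $D(2n,-3)$ are fibered for $n<-1$, so their complements $M_{D_n}$ fiber over the circle with fiber a surface $\Sigma_{g,1}$ of some genus $g$ depending on $n$, and monodromy some mapping class $\psi_n \in \operatorname{Mod}(\Sigma_{g,1})$. First I would identify the fiber surface and compute the monodromy of the fibration of $M_{D_n}$ explicitly, reading it off from the Seifert surface obtained by applying Seifert's algorithm to (or plumbing Hopf bands according to) the standard diagram of $D(2n,-3)$ in Figure \ref{fig:doubletwistknot}. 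The key claim here is that, up to the equivalence in the Remark following Conjecture \ref{conj: AMU} (conjugacy and powers), this monodromy is exactly $\phi_g$ (and $\phi'_g$ for the mirror/reverse orientation), where the curves $c, a_1, b_1, \dots, a_g$ of Figure \ref{fig: generatingcurves} are the cores of the plumbed bands.

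Once the identification of monodromies is in place, the pseudo-Anosov assertion follows because $M_{D_n}$ is hyperbolic (being a nontrivial double twist knot complement) and fibered, so by Thurston's theorem its monodromy is pseudo-Anosov; hence $\phi_g, \phi'_g$ are pseudo-Anosov for every $g$ arising from an admissible $n$. For the AMU conjecture itself, the plan is to use the main implication of \cite{DKAdvances}: that the Exponential Growth Conjecture (or, more precisely, the established direction relating $q$-hyperbolicity of a fibered knot complement to the AMU property of its monodromy) implies the AMU conjecture for the corresponding mapping class. Since Theorem \ref{thm: doubleknotintro} gives that each $D_n$ is $q$-hyperbolic with $\textit{lTV}(M_{D_n}) \geq \operatorname{vol}(M_n) > 0$, the criterion of \cite{DKAdvances} applies and yields that the monodromy $\psi_n$, hence $\phi_g$ (and $\phi'_g$), satisfies the AMU conjecture.

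The remaining step is to ensure that \emph{every} genus $g \geq 1$ is realized. Here I would check that the sequence of monodromies $\psi_n$ for $n = -2, -3, -4, \dots$ sweeps out fibers of all genera $g \geq 1$; concretely, the genus of the fiber of $D(2n,-3)$ grows with $|n|$, and I would verify from the Seifert surface computation that each value $g \geq 1$ occurs for a suitable $n \neq 0, -1$. Combined with the conjugacy/power invariance noted in the Remark after Conjecture \ref{conj: AMU}, realizing one pseudo-Anosov representative of the required form on $\Sigma_{g,1}$ for each $g$ suffices to deduce the statement for $\phi_g$ and $\phi'_g$ as written.

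The main obstacle I anticipate is the explicit monodromy identification: verifying that the geometric monodromy of the fibration of $M_{D(2n,-3)}$ agrees, up to conjugacy and powers, with the specific product of Dehn twists $\phi_g = \tau_c \tau_{a_1} \tau_{b_1}^{-1} \cdots \tau_{a_g}$ along the prescribed curves of Figure \ref{fig: generatingcurves}. This requires a careful bookkeeping of the fiber surface as a plumbing of positive and negative Hopf bands and a matching of the twisting regions of the double twist knot with the individual Dehn twist factors; getting the signs and the count of twists to line up with the stated word, while correctly tracking the dependence of $g$ on $n$, is the delicate part of the argument. The appeal to hyperbolicity of $M_{D_n}$ and to the $q$-hyperbolicity result of Theorem \ref{thm: doubleknotintro} is then routine.
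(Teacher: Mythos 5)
Your proposal is correct and follows essentially the same route as the paper: identify $\phi_g$ (and $\phi'_g$) as the monodromy of the fibered double twist knot $D(3,2g)=D^{*}(2n,-3)$ with $n=-g$, deduce pseudo-Anosovness from hyperbolicity of the knot complement via Thurston, and apply the criterion of \cite{DKAdvances} (Theorem 1.2) together with the $q$-hyperbolicity from Theorem \ref{thm: doubleknotintro}. The ``delicate step'' you flag is exactly what the paper's Proposition \ref{prop: twistknotsfibered} carries out, using the continued-fraction expansion $[2,2,-2,\dots,-2,2]$ and the Gabai--Kazez description of the fiber as a Murasugi sum of Hopf bands whose core curves are those of Figure \ref{fig: generatingcurves}.
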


Given $\phi\in \rm{Mod}(\Sigma_{g,1})$, the mapping torus 
$$T_{\phi}=\Sigma_{g,1}\times [-1,1]/_{(x,1)\sim ({ {\phi}(x),-1)}}$$ is a 3-manifold which fibers
over $S^1$ with fiber $\Sigma_{g,1}$ and monodromy $\phi$. By \cite[Theorem 1.2]{DKAdvances}, if
$T_{\phi}$ is $q$-hyperbolic, then ${\phi}$ satisfies the AMU conjecture. To prove Theorem
\ref{AMUap}, we will show that each of $T_{\phi_{g}}$ and $T_{\phi'_{g}}$ is homeomorphic to the
complement of a $q$-hyperbolic double twist knot.

\subsection{Fibered double twist knots} The knot $D(m, n)$ is the two-bridge knot associated with
the rational number 
\[
  \frac{n}{mn - 1} = [m, -n] = \frac{1}{m - \frac{1}{n}}. 
\]
In general, we define the continued fraction expansion (CFE) by
\[
  [a_1, a_2, \dots, a_k] := \cfrac{1}{a_1+\cfrac{1}{a_2+\cfrac{1}{a_3 + \lastcfrac{1}{a_k}}}}.
\]
We note that a CFE for a rational number is not unique, hence a two-bridge knot can have multiple
associated CFEs. The following properties of double twist knots will be useful:
\begin{enumerate} [(i)]
  \item $D(m, n) = D(n, m)$ are equivalent knots.
  \item For a double twist knot $D(m,n)$ with CFE $[a_1, \dots, a_k]$, its mirror image is $D^{*}(m,
  n): = D(-m, -n)$ and has CFE $[-a_1 \dots, -a_k]$. 
\end{enumerate}

We recall the following well known lemma that can be found, for example, in \cite{GABAIKAZEZ}.

\begin{lem}\label{lem: fiberedCFE} A two-bridge knot is fibered if and only if it has a CFE of the
  form $[a_1, \dots, a_k]$ such that $|a_i| = 2$ for $i = 1, \dots, k$ and $k$ is even.
\end{lem}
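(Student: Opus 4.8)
The plan is to prove Lemma \ref{lem: fiberedCFE} by connecting the continued fraction expansion of a two-bridge knot to a genus-minimizing Seifert surface built by plumbing Hopf bands, and then invoking the classical fact that a plumbing of Hopf bands is a fiber surface exactly when each band is a positive or negative Hopf band. First I would fix the correspondence between a CFE $[a_1, \dots, a_k]$ and a Seifert surface for the associated two-bridge knot. A continued fraction with all $|a_i| = 2$ and $k$ even encodes, in a standard way, an alternating sequence of full twist regions, and the natural Seifert surface obtained by applying Seifert's algorithm (or the obvious spanning surface coming from the plumbing picture) is then a linear plumbing of $k$ Hopf bands, each band being positive or negative according to the sign of the corresponding $a_i$. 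The parity condition that $k$ is even is exactly what is needed for the resulting surface to be orientable and to span a knot rather than a two-component link.

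Next I would establish the two directions separately. For the ``if'' direction, assuming a CFE of the prescribed form, I would identify the Seifert surface as a Murasugi sum (plumbing) of Hopf bands and then apply Stallings' theorem that the plumbing of fiber surfaces is again a fiber surface; since each Hopf band is the fiber surface of the Hopf link, the plumbing fibers, so the knot is fibered. For the ``only if'' direction, I would use the classification of fibered two-bridge knots in terms of their Alexander polynomials: a two-bridge knot is fibered if and only if its Alexander polynomial is monic, and the Alexander polynomial of the two-bridge knot associated with $[a_1, \dots, a_k]$ has leading coefficient $\pm\prod a_i / 2^{\,?}$ governed by the $a_i$, so that monicity forces each $|a_i| = 2$. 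Alternatively, and perhaps more cleanly, I would cite the structural result that every two-bridge knot admits a canonical even-length CFE and then reduce to comparing genus with the degree of the Alexander polynomial.

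The main obstacle I expect is the ``only if'' direction, specifically controlling the non-uniqueness of the continued fraction expansion. Since the same rational number has many CFEs, the statement is really that \emph{some} CFE of the special form exists precisely when the knot is fibered, and I must rule out the possibility that a knot fails to be fibered yet still has a special-form expansion, and conversely that a fibered knot might only have special-form expansions after passing to a suitable normalized representative. I would handle this by working with the unique subtractive (or even) continued fraction expansion that corresponds to the minimal-genus plumbing surface, and by matching the fiberedness criterion (monic Alexander polynomial, equivalently a fibered minimal-genus surface) against the combinatorics of that canonical expansion. The routine twist-region-to-Hopf-band bookkeeping and the verification that the plumbing is orientable when $k$ is even I would treat as standard and relegate to the cited reference \cite{GABAIKAZEZ}.
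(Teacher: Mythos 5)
The paper does not actually prove this lemma---it is stated as well known and attributed to \cite{GABAIKAZEZ}---so there is no in-text argument to compare against, only the surrounding discussion, which invokes exactly the Murasugi-sum-of-Hopf-bands picture you describe. Your sketch is the standard argument and is sound in outline. The ``if'' direction (a CFE with all $|a_i|=2$ and $k$ even yields a linear plumbing of $k$ Hopf bands with connected boundary, which is a fiber surface by Stallings) is precisely what the paper uses afterward to read off the monodromy $\phi_g$. For the ``only if'' direction, your instinct to route through the unique even-entry CFE is the right way to dispose of the non-uniqueness issue: every two-bridge knot has a canonical expansion $[2b_1,\dots,2b_k]$ with $k$ even, the associated plumbing of twisted annuli has Seifert matrix with determinant $\pm\prod_i b_i$, so the leading coefficient of $\Delta_K(t)$ is $\pm\prod_i b_i$ (this cleans up your ``$\prod a_i/2^{?}$'' hedge: it is $\prod_i (a_i/2)$ up to sign); monicity of $\Delta_K$, which is necessary for fiberedness of any knot and sufficient for alternating ones by Murasugi, then forces every $|b_i|=1$. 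The only points requiring care, which you correctly flag as routine bookkeeping, are the sign and orientation conventions matching the CFE to the plumbing and the verification that odd $k$ produces a two-component link rather than a knot.
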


As shown in \cite{GABAIKAZEZ}, every fibered two-bridge knot can be identified with the boundary of
the Murasugi sum of a sequence of right and left Hopf bands determined by the entries in its CFE.
The monodromy of the left (resp. right) Hopf band is the left (resp. right) Dehn twist, and the
monodromy of a fibered two-bridge knot with CFE $[a_1, \dots, a_k]$ (with $|a_i| = 2$) is given by
the product of $k$ Dehn twists corresponding to each Hopf band in the Murasugi sum. In this case,
the resulting fiber is the surface of genus $\frac{k}{2}$ with one boundary component.

\begin{prop}\label{prop: twistknotsfibered} For any integer $g>0$, the double twist knot $D(3, 2g)
  \subset S^3$ is fibered with monodromy $\phi_g \in$ Mod$(\Sigma_{g, 1})$. 
\end{prop}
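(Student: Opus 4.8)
The plan is to establish the fibering via the continued fraction criterion and then identify the monodromy explicitly as the stated product of Dehn twists. First I would observe that by property (i) of double twist knots, $D(3, 2g) = D(2g, 3)$, and I want a continued fraction expansion with all entries equal to $\pm 2$ and of even length, so that Lemma \ref{lem: fiberedCFE} applies. Starting from the rational number associated with $D(3, 2g)$, namely $\frac{2g}{3\cdot 2g - 1} = [3, -2g]$, I would rewrite the single entry $3$ and the single entry $-2g$ as strings of $\pm 2$'s using the standard continued fraction identities (for instance $3 = [2,-2,\dots]$-type expansions and $-2g$ as a concatenation of $g$ copies of the entry yielding $2$'s). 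The goal is a CFE of the form $[\underbrace{2, -2, 2, -2, \dots}_{2g}]$ or its appropriate sign pattern, of length exactly $2g$, which is even; this immediately yields fiberedness with fiber $\Sigma_{g,1}$ since the genus equals $\frac{k}{2} = g$.

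Next I would invoke the Gabai--Kazez description \cite{GABAIKAZEZ} recalled in the paragraph before Proposition \ref{prop: twistknotsfibered}: the fiber is the Murasugi sum (plumbing) of $2g$ Hopf bands, one for each entry $a_i$ of the CFE, where an entry $+2$ contributes a band whose monodromy is a right Dehn twist and an entry $-2$ contributes a band whose monodromy is a left Dehn twist (or the reverse convention, to be fixed by orientation). The total monodromy is then the ordered product of these $2g$ Dehn twists along the cores of the plumbed bands. The main work is to verify that, under the plumbing, the core curves of the successive Hopf bands are precisely the curves $c, a_1, b_1, \dots, b_{g-1}, a_g$ drawn in Figure \ref{fig: generatingcurves}, arranged in a linear chain so that consecutive curves intersect once and non-consecutive curves are disjoint. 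I would match the plumbing pattern to the figure by tracking how each band is attached to the previous one and checking the geometric intersection pattern of the cores.

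The hard part will be pinning down the exact signs and the order of the Dehn twists so that the product matches $\phi_g = \tau_c \tau_{a_1} \tau_{b_1}^{-1} \tau_{a_2} \cdots \tau_{b_{g-1}}^{-1} \tau_{a_g}$ precisely, rather than some conjugate or inverse. This requires being careful about two conventions simultaneously: the sign convention relating an entry $\pm 2$ in the CFE to a left versus right Hopf band, and the convention relating the Murasugi-sum orientation to the sign of the resulting Dehn twist. The alternating pattern of inverses on the $b_i$ curves in $\phi_g$ should correspond exactly to the alternating signs $2, -2, 2, -2, \dots$ in the chosen CFE, with the untwisted (positive) entries giving $\tau_c, \tau_{a_1}, \dots, \tau_{a_g}$ and the negative entries giving $\tau_{b_1}^{-1}, \dots, \tau_{b_{g-1}}^{-1}$. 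I would fix these conventions once at the outset, carry them consistently through the plumbing description, and confirm the final product by checking a small base case (say $g = 1$, where $D(3,2)$ is the trefoil with a single band pair) before asserting the general pattern.
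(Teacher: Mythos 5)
Your proposal takes essentially the same route as the paper: produce a length-$2g$ continued fraction expansion of $[3,-2g]$ with all entries $\pm 2$ (the paper establishes $[2,2,-2,2,\dots,-2,2]$ by induction on $g$ via the identity $A_g = [3,-2g]-2$), apply Lemma \ref{lem: fiberedCFE} to get fiberedness with fiber $\Sigma_{g,1}$, and then read the monodromy off the Gabai--Kazez Murasugi sum of $2g$ Hopf bands whose core curves form the chain $c, a_1, b_1, \dots, b_{g-1}, a_g$. The only small correction is that the sign pattern is not strictly alternating but begins $2,2$ and then alternates, which is forced by your own (correct) count of $g+1$ positive twists and $g-1$ negative ones in $\phi_g$.
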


\begin{proof}
 
Let $n \leq -1$ be an integer, and set $g:=-n$. By the properties of twist knots, we have $D(2n, -3)
  = D^{*}(3, 2g)$. The knot $D(3, 2g)$ is the two-bridge knot associated to $[3, -2g] =
  \frac{-2g}{-6g+1}$. By Lemma \ref{lem: fiberedCFE}, $D(2n, -3)$ is fibered if and only if $D(3,
  2g)$ has a CFE of the form $[a_1, \dots, a_k]$ with $|a_i| = 2$. We will show that $D(3, 2g)$ has
  a CFE $[2, 2, -2, 2, \dots, -2, 2]$ of length $2g$.

  We note this CFE alternates sign beginning with the second term. We assume 
  \begin{align}\label{indhyp}
    \frac{-2g}{-6g + 1} &= [2, 2, -2, 2, \dots, (-1)^{2g-1} 2, (-1)^{2g} 2] = \cfrac{1}{2 + A_g},
  \end{align}
  where $A_g: = [2, -2, 2, \dots, (-1)^{2g-1} 2, (-1)^{2g} 2]$ of length $2g-1$, and proceed by
  induction. For $D(3, 2(g+1))$, we have
  \begin{align*}
    [2, 2, -2, \dots, (-1)^{2g + 2} 2] &= \cfrac{1}{2 + \cfrac{1}{2 + \cfrac{1}{-2 + A_g}}} \\
    &= \frac{-2(g+1)}{-6(g + 1) + 1} \\
    &= [3, -2(g+1)],
  \end{align*}
  where the second line follows from the identity $A_g = [3, -2g] - 2$. This establishes the claim,
  which implies that the double twist knot $D(2n, -3)$ is fibered for $n \leq -1$. 

  Following \cite{GABAIKAZEZ}, the knot $D(3, 2g)$ can be identified with the boundary of the
  Murasugi sum of $2g$ Hopf bands. The monodromy is then a product of left and right Dehn twists
  corresponding to the sign of each entry of the CFE $[2, 2, -2, 2, \dots, (-1)^{2g-1} 2, (-1)^{2g}
  2]$. These Dehn twists correspond to the collection of curves on $\Sigma_{g, 1}$ shown in Figure
  \ref{fig: generatingcurves}, and the monodromy $\phi_g=\tau_c\tau_{a_1}
  \tau^{-1}_{b_1}\tau_{a_1}\cdots \tau^{-1}_{b_{g-1}}\tau_{a_g}$.
 \end{proof}

\subsection{Proof of Theorem \ref{AMUap}} By Proposition \ref{prop: twistknotsfibered}, the knot
$D(3, 2g)$, for $g>0$, is fibered. Since these knots  are hyperbolic (see for example \cite{FG}), by
the work of Thurston the mapping class $\phi_g$  is pseudo-Anosov \cite{ThurstonGT3manifolds}.  By
Theorem \ref{thm: doubleknotintro}, these knots are $q$-hyperbolic. The mirror image $D(-2g, -3) =
D^{*}(3, 2g)$ is also hyperbolic, $q$-hyperbolic, and fibered with monodromy
$\phi'_g=\tau^{-1}_c\tau^{-1}_{a_1} \tau_{b_1}\tau^{-1}_{a_1}\cdots \tau_{b_{g-1}}\tau^{-1}_{a_1}$.
Hence, by   \cite[Theorem 1.2]{DKAdvances},  ${\phi_g}$ and $\phi'_g$ satisfy the AMU
conjecture.\qed

\medskip

\section{Low crossing knots and low volume 3-manifolds}\label{sec: low crossing tables} Tables
\ref{tab: low crossing n} and \ref{tab: low crossing m} give the twist knots $D_n = D(2n, -3)$ and
$D_n' = D(2n,-2)$ up to 10 crossings, respectively. By Lemma \ref{lem: twist knots from 4_1}, all of
these share surgeries with $4_1$.

We identify these knots by giving an alternating projection realizing the crossing numbers in
conjunction with Rolfsen's tabulation of low-crossing knots \cite{rolfsen2003knots}. We note that for
the knot $D(2n, -3)$, the resulting diagram with $2n+3$ crossings corresponding to Figure
\ref{fig:doubletwistknot} is alternating when $n \geq 1$, allowing us to identify the odd crossing
knots of Table \ref{tab: low crossing n}. To identify the even crossing knots of Table \ref{tab: low
crossing n}, we see in Figure \ref{fig: alteven} that, after applying Reidemeister moves, we obtain
an alternating diagram for $D(-2n, -3)$ with $2n+2$ crossings. 
 
\begin{figure}
  \includegraphics*[scale=0.6]{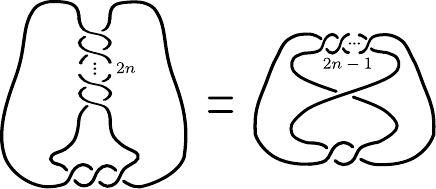}
  \caption{Alternating diagram of $D(-2n, -3)$ realizing the even crossing knots of Table \ref{tab:
  low crossing n}.}
  \label{fig: alteven}
\end{figure}

Similarly, the original diagram for $D(2n, -2)$ is also alternating with $2n+2$ crossings for $n\geq
1$, allowing us to identify the even crossing knots of Table \ref{tab: low crossing m}. Figure
\ref{fig: altodd} gives an alternating diagram for $D(-2n, -2)$ with $2n+1$ crossings, realizing the
odd crossing knots of Table \ref{tab: low crossing m}.

 \begin{table}
  \centering
\begin{center}
  \begin{tabular}{ |c|c|c|c|c|c|c|c|}
   \hline
   $n$ & -4 & -3 & -2 & -1 & 1 & 2 & 3 \\
   \hline
   $D_n$ & $10_2$ & $8_2$ & $6_2$ & $4_1$ & $5_2$ & $7_3$ & $9_3$ \\
   \hline
  
  \end{tabular}
   \vskip 0.06in \caption{Low-crossing knots $D_n = D(2n, -3)$.}
  \label{tab: low crossing n}
  \end{center}
\end{table}

\begin{figure}[h]
  \includegraphics*[scale=0.7]{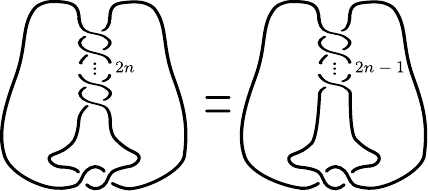}
  \caption{Alternating diagram of $D(-2n, -2)$ realizing the odd crossing knots of Table \ref{tab:
  low crossing m}.}
  \label{fig: altodd}
 \end{figure}

\begin{table}
  \centering
  \begin{center}
    \begin{tabular}{ |c|c|c|c|c|c|c|c|c| }
     \hline
     $n$ & -4 & -3 & -2 & -1 & 1 & 2 & 3 & 4 \\
     \hline
     $D_n'$ & $9_2$ & $7_2$ & $5_2$ & $3_1$ & $4_1$ & $6_1$ & $8_1$ & $10_1$\\
     \hline
   
    \end{tabular}
    \vskip 0.06in \caption{Low-crossing knots $D_n' = D(2n, -2)$.}
    \label{tab: low crossing m}
    \end{center}
  \end{table}

By Proposition \ref{prop: twistknotsfibered}, for $n \leq -1$ the knot $D(2n, -3)$ is fibered with
genus $|n|$. By Table \ref{tab: low crossing n}, for $n=-4,-3,-2,-1$, the knot $D(2n, -3)$ is
identified as the corresponding knot shown in the table. Indeed the knots $10_2, 8_2, 6_2,$ and
$4_1$ are well known to be fibered of genus, $4,3,2,$ and $1$, respectively \cite{knotinfo}.

\begin{remark}
  The manifold $M_{4_1}(-5)$, which is homeomorphic to $M_{5_2}(5)$ by Lemma \ref{lem: twist knots
  from 4_1} and Table \ref{tab: low crossing n}, is known as the Meyerhoff manifold. It is the
  second-smallest volume closed orientable hyperbolic $3$-manifold, with volume approximately
  $0.9814$. 
\end{remark}

Futer, Purcell, and Schleimer recently wrote a software package \cite{FuterPurcellSchleimerCode}, in
conjunction with forthcoming work \cite{FuterPurcellSchleimerPaper}, for testing the cosmetic
surgery conjecture. At our request, they extended the code to allow for testing whether pairs of
cusped $3$-manifolds have common Dehn fillings as well as identifying those fillings. Running the
code for knots up to 12 crossings, as well as on SnapPy's census of the 1267 hyperbolic knot
complements that can be triangulated with fewer than 10 tetrahedra \cite{SnapPy}, they verified the
data given in Tables \ref{tab: low crossing n} and \ref{tab: low crossing m} and identified many
additional knots which share surgeries with $4_1$.

  Tables \ref{tab: qhyp knots2-7} and \ref{tab: qhyp knots8-9}  list all the knot complements from
the SnapPy census of hyperbolic cusped 3-manifolds that admit triangulations with at most nine
tetrahedra and have shared Dehn fillings with the complement of $4_1$. The information on the tables
is recorded as follows: Column 1 presents the knot $K$  with the notation used in the SnapPy census
while Column 2 gives the approximate volume of $M_K$. Column 3 gives the surgery slopes  $a/b$,
$p/q$, with $M_K(a/b)\cong M_{4_1}(p/q)$ and Column 4 gives the approximate volume of that manifold.
All of these knots, many of which are twisted torus knots, have known diagrams, but they may be
complicated and require hundreds of crossings. Using the tables of
\cite{CallahanDeanWeeks,ChampanerkarKofmanPatterson,ChampanerkarKofmanMullen}, we may identify some
of the examples from the knot tables.  
Note that since since $4_1$ is amphicheiral, we have $M_{4_1}(-p/q)\cong M_{4_1}(p/q)$. Hence the
  slopes $p/q$ in the tables can be, equivalently, be replaced with its negative.

\begin{table}
   \begin{center}
    \begin{tabular}{ |c|c|c|c|c|}
     \hline
     $K$ & vol$(M_K)$ & Slopes $a/b$, $p/q$ & vol$(M_K(a/b))$ & Knot \\
     \hline
     $K2_1$      & 2.029883 & -               & - &  $4_1$ \\ \hline
     $K3_2$      & 2.828122 & 5, -5            & 0.981369 &  $5_2$ \\
                 &          & 1, $1/2$        & 1.398509 & \\ \hline
     $K4_1$      & 3.163963 & 1, $-1/2$       & 1.398509 &   $6_1$  \\ \hline
     $K4_2$      & 3.331744 & 1, $1/3$        & 1.731983 &   $7_2$  \\ \hline
     $K5_2$      & 3.427205 & 1, $-1/3$       & 1.731983 &  $8_1$ \\ \hline
     $K5_3$      & 3.486660 & 1, $1/4$        & 1.858138 &  $9_2$ \\ \hline
     $K5_9$      & 4.056860 & $-2$, $2/3$     & 1.737124 &  $10_{132}$ \\ \hline
     $K5_{12}$   & 4.124903 & 3, $3/2$        & 1.440699 &  $8_{20}$ \\ \hline
     $K5_{13}$   & 4.124903 & 1, $1/3$        & 1.731983 & $11n_{38}$  \\ \hline
     $K5_{19}$   & 4.400833 & $-7$, $-7/2$    & 1.649610 & $6_2$ \\ \hline
     $K5_{20}$   & 4.592126 & 9, $-9/2$       & 1.752092 &  $7_3$ \\ \hline
     $K6_1$      & 3.526196 & 1, $-1/4$       & 1.858138 & $10_1$  \\ \hline
     $K6_2$      & 3.553820 & 1, $1/5$        & 1.918602 &  $11a_{247}$ \\ \hline
     $K6_8$      & 4.293750 & $-3$, $3/5$     & 1.921026 &   \\ \hline
     $K6_9$      & 4.307917 & $-2$, $2/5$     & 1.919520 &   \\ \hline
     $K6_{23}$   & 4.935243 & $-11$, $-11/3$  & 1.876053 &  $8_2$ \\ \hline
     $K6_{24}$   & 4.994856 & $13$, $-13/3$    & 1.903695 & $9_3$  \\ \hline
     $K6_{37}$   & 5.413307 & 7, $7/3$        & 1.805827 &  $15n_{41127}$ \\ \hline
     $K7_1$      & 3.573883 & 1, $-1/5$       & 1.918602 &  $12a_{803}$ \\ \hline
     $K7_2$      & 3.588914 & 1, $1/6$        & 1.952062 & $13a_{3143}$ \\ \hline
     $K7_{10}$   & 4.354670 & $-4$, $4/7$     & 1.973762 &   \\ \hline
     $K7_{11}$   & 4.359783 & $-3$, $3/7$     & 1.973161 &   \\ \hline
     $K7_{41}$   & 4.933530 & $-5$, $5/4$     & 1.873482 &   \\ \hline
     $K7_{44}$   & 4.993457 & 7, $7/5$        & 1.932061 &   \\ \hline
     $K7_{45}$   & 5.114841 & $-15$, $-15/4$  & 1.946574 &  $10_2$ \\ \hline
     $K7_{46}$   & 5.140207 & $17$, $-17/4$   & 1.957888 &  $11a_{364}$ \\ \hline
     $K7_{95}$   & 5.860539 & 11, $11/2$      & 1.822675 &  $10_{128}$ \\ \hline
     $K7_{96}$   & 5.860539 & 13, $13/3$      & 1.903695 &  $11n_{57}$ \\ \hline
     $K7_{98}$   & 5.904086 & 14, $14/3$      & 1.915331 &  $12n_{243}$ \\ \hline
     $K7_{129}$  & 6.922634 & $-7$, $7/3$     & 1.805827 &   \\
      \hline
    \end{tabular}
     \vskip 0.06in \caption{Knots in the SnapPy census of cusped hyperbolic 3-manifolds that share
    surgeries with $4_1$. } 
    \label{tab: qhyp knots2-7}
    \end{center}
  \end{table}

   The following applies to all the knots in Tables \ref{tab: qhyp knots2-7} and \ref{tab: qhyp
  knots8-9}:
  
  \begin{prop}\label{prop: bounds} Suppose that $K$ is a hyperbolic knot  in $S^3$ such that $M_K$
  admits a triangulation with $t$ tetrahedra. Suppose, moreover, that  $M_K(a/b)\cong M_{4_1}(p/q)$,
  for some slopes $a/b, p/q \in \Q$, where $p/q$ is a non-exceptional slope of $4_1$. Then we have
  
  $$ {\rm vol}(M_K(a/b))\leq \textit{lTV}(M_{K})\leq v_{\rm {oct}}\cdot t,$$
  
  where $v_{\rm {oct}}\approx 3.6638$ is the volume of the ideal regular octahedron.
  \end{prop}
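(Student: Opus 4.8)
The plan is to establish the two inequalities in the displayed bound separately, since they come from completely different sources.

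\textbf{Lower bound.} First I would prove $\mathrm{vol}(M_K(a/b))\leq \textit{lTV}(M_K)$. Since $p/q$ is a non-exceptional slope of $4_1$, the manifold $M_{4_1}(p/q)$ is hyperbolic by Proposition \ref{pro:ex}, and by Theorem \ref{thm: Ohtsuki, Wong-Yang q-hyp} we have $\textit{lTV}(M_{4_1}(p/q))=\mathrm{vol}(M_{4_1}(p/q))$. By hypothesis $M_K(a/b)\cong M_{4_1}(p/q)$, so in particular $\mathrm{vol}(M_K(a/b))=\mathrm{vol}(M_{4_1}(p/q))$ and $\textit{lTV}(M_K(a/b))=\textit{lTV}(M_{4_1}(p/q))$. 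Now $M_K(a/b)$ is obtained from $M_K$ by a single Dehn-filling, so Theorem \ref{thm: lTV bounded} gives $\textit{lTV}(M_K(a/b))\leq \textit{lTV}(M_K)$. Chaining these equalities and this inequality yields
\[
\mathrm{vol}(M_K(a/b))=\textit{lTV}(M_{4_1}(p/q))=\textit{lTV}(M_K(a/b))\leq \textit{lTV}(M_K),
\]
which is exactly the left-hand inequality. This step is essentially a rearrangement of the argument already used in the proof of Theorem \ref{thm: doubleknotintro}, and I expect it to be routine.

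\textbf{Upper bound.} The right-hand inequality $\textit{lTV}(M_K)\leq v_{\mathrm{oct}}\cdot t$ is the part that requires an external input, and this is where I expect the main obstacle to lie. The quantity $\textit{lTV}(M_K)$ is defined as a $\liminf$ of $\frac{2\pi}{r}\log|\mathrm{TV}_r(M_K; q)|$ over odd $r$, so I need an a priori upper bound on the Turaev--Viro invariants in terms of the triangulation complexity. The natural tool is the result of Detcherry--Kalfagianni \cite{detcherryKalfagianni2019gromov} (the same paper supplying Theorem \ref{thm: lTV bounded}), which bounds the growth rate of $\mathrm{TV}_r$ by $v_{\mathrm{oct}}$ times the number of tetrahedra in any triangulation: concretely, for a manifold admitting a triangulation with $t$ tetrahedra one has $\textit{lTV}(M_K)\leq v_{\mathrm{oct}}\cdot t$. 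This follows from the state-sum expression of $\mathrm{TV}_r$ over the triangulation, where each tetrahedron contributes a $6j$-symbol whose asymptotic growth is governed by $v_{\mathrm{oct}}$, and bounding the number of admissible colorings subexponentially. I would invoke this bound directly, taking the triangulation with $t$ tetrahedra guaranteed by hypothesis.

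Combining the two halves gives the chain $\mathrm{vol}(M_K(a/b))\leq \textit{lTV}(M_K)\leq v_{\mathrm{oct}}\cdot t$, completing the proof. The genuine content is entirely in the upper bound: the lower bound is a formal consequence of results already assembled in the paper, whereas the upper bound rests on the precise form of the complexity bound for Turaev--Viro growth from \cite{detcherryKalfagianni2019gromov}, and the only care needed is to make sure the cited statement is phrased for general triangulated manifolds (not just the Dehn-filling setting) and that the constant is indeed $v_{\mathrm{oct}}$ per tetrahedron rather than some other normalization.
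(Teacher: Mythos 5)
Your proposal is correct and follows essentially the same route as the paper: the lower bound by chaining Theorem \ref{thm: Ohtsuki, Wong-Yang q-hyp} with Theorem \ref{thm: lTV bounded}, and the upper bound by invoking the general triangulation-complexity bound on Turaev--Viro growth. The only discrepancy is bibliographic: the paper cites the upper bound from \cite[Corollary 3.9]{growth6j} rather than from \cite{detcherryKalfagianni2019gromov}, though the statement you describe (and its $6j$-symbol/coloring-count heuristic) is the right one.
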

\begin{proof} The upper bound follows at once from \cite[Corollary 3.9]{growth6j}.

By Theorem \ref{thm: Ohtsuki, Wong-Yang q-hyp}, ${\rm vol}(M_{4_1}(p/q))=\textit{lTV}(M_{4_1}(p/q))$
and, by assumption, $M_K(a/b)\cong M_{4_1}(p/q)$. Combining these with Theorem \ref{thm: lTV
bounded}, leads to the lower bound of $\textit{lTV}(M_{K})$.
\end{proof}

\begin{remark}
  It is known that the lowest volume hyperbolic knots are $4_1$, $5_2$, $6_1$, and the
  $(-2,3,7)$-pretzel knot. In \cite{chenyang2018vol}, Chen and Yang give computational evidence for
  the Turaev--Viro invariant volume conjecture for each of these knots. By Lemma \ref{lem: twist
  knots from 4_1}, $5_2$ and $6_1$ share surgeries with $4_1$, hence are $q$-hyperbolic by Theorem
  \ref{thm: doubleknotintro}. However, the $(-2,3,7)$-pretzel knot was shown not to share any
  surgeries with $4_1$ using the code of Futer--Purcell--Schleimer \cite{FuterPurcellSchleimerCode}.
  Similarly the knot $6_3$ was shown not to share any surgeries with $4_1$, making it the only
  hyperbolic knot with up to six crossings for which $q$-hyperbolicity cannot be decided with the
  methods of this paper.
\end{remark}

\begin{remark}
  Table \ref{tab: qhyp knots2-7} includes the knots $8_{20}, 10_{132}, 11n_{38}$, and $11n_{57}$.
  According to KnotInfo \cite{knotinfo}, the complements $M_{8_{20}}$, $M_{10_{132}}$,
  $M_{11n_{38}}$, and $M_{11n_{57}}$ are also fibered, so their associated monodromies (as well as
  powers of conjugates of those mapping classes) satisfy he AMU
  conjecture. 
\end{remark}

\begin{remark}
  As we see in Table \ref{tab: qhyp knots2-7}, the knot $K5_{12} = 8_{20}$ shares a surgery with
  $4_1$. In particular, $M_{4_1}(3/2) \cong M_{8_{20}}(3)$. This is a closed manifold of volume
  $\approx 1.440699$. In addition, as shown in \cite{AJLO2015}, $8_{20}$ admits a good annulus
  presentation. This means an analogous version of Theorem \ref{thm: 6_2annulustwist} also holds for
  $8_{20}$.
\end{remark}

  \begin{table}
       \begin{center}
      \begin{tabular}{ |c|c|c|c|c| }
       \hline
       $K$ & vol$(M_K)$ & Slopes $a/b$, $p/q$ & vol$(M_K(a/b))$ & Knot \\
       \hline
       $K8_1$      & 3.60046726278 & 1, $-1/6$      & 1.9520620754135 &  $14a_{12741}$ \\ \hline
       $K8_2$      & 3.6095391745 & 1, $1/7$        & 1.9724601973306 &  $15a_{54894}$ \\ \hline
       $K8_9$      & 4.3790606712 & $-5$, $5/9$     & 1.9957717794010 &   \\ \hline
       $K8_{10}$   & 4.38145643736 & $-4$, $4/9$    & 1.9954776244141 &   \\ \hline
       $K8_{61}$   & 5.07001608898 & 9, $9/7$       & 1.9788631982608 &   \\ \hline
       $K8_{62}$   & 5.0827080657 & 11, $11/8$      & 1.9914466741922 &   \\ \hline
       $K8_{64}$   & 5.1955903246 & $-19$, $19/5$   & 1.9776430099735 &  $12a_{722}$ \\ \hline
       $K8_{65}$   & 5.2086109485 & $-21$, $21/5$   & 1.983357467405  &  $13a_{4874}$ \\ \hline
       $K8_{96}$   & 5.75222662008 & 11, $11/5$     & 1.9478817102192 &   \\ \hline
       $K8_{105}$  & 5.8281487245 & $-16$, $16/7$   & 1.9891579197851 &   \\ \hline
       $K8_{133}$  & 6.1411744018 & 22, $22/5$      & 1.9859441335531 &   \\ \hline
       $K8_{135}$  & 6.1504206159 & 23, $23/5$      & 1.9883610027459 &  $T(7, 9, 6,-6, 5,-1)$ \\ \hline
       $K8_{143}$  & 6.2597017011 & $-13$, $13/4$   & 1.9334036965515 &   \\ \hline
       $K8_{145}$  & 6.27237250941 & 1, $1/2$       & 1.3985088841508 &  $14n_{18212}$  \\ \hline
       $K8_{268}$  & 7.26711903086 & 9, $9/4$       & 1.9026876676640 &   \\ \hline
       $K9_1$      & 3.61679304740 & $-1$, $-1/7$   & 1.9724601973306 &   \\ \hline
       $K9_2$      & 3.62268440821 & 1, $1/8$       & 1.9857927453641 &   \\ \hline
       $K9_8$      & 4.3912243457 & $-6$, $6/11$    & 2.0069885249369 &   \\ \hline
       $K9_9$      & 4.39253386353 & 5, $5/11$      & 2.0068241855029 &   \\ \hline
       $K9_{83}$   & 5.1043901461 & 13, $13/10$     & 2.004926648441  &   \\ \hline
       $K9_{85}$   & 5.1089909300 & $-15$, $15/11$  & 2.0095023855854 &   \\ \hline
       $K9_{93}$   & 5.23864536794 & 23, $-23/6$    & 1.9940644235057 &  $14a_{12197}$ \\ \hline
       $K9_{94}$   & 5.24618858374 & 25, $-25/6$    & 1.9973474789782 &  $15a_{85258}$ \\ \hline
       $K9_{152}$  & 5.8653629974 & 20, $20/9$      & 2.004886373798  &   \\ \hline
       $K9_{155}$  & 5.8812168764 & $-25$, $25/11$  & 2.0133867882020 &   \\ \hline
       $K9_{242}$  & 6.2152290434 & 31, $31/7$      & 2.007727892627  &   \\ \hline
       $K9_{244}$  & 6.21858163948 & $-32$, $32/7$  & 2.0085996110216 &   \\ \hline
       $K9_{282}$  & 6.5328202770 & $-21$, $21/4$   & 1.9754820965797 &   \\ \hline
       $K9_{296}$  & 6.6272713527 & 27, $27/5$      & 1.9965186652378 &   \\ \hline
       $K9_{299}$  & 6.6445653099 & 19, $19/3$      & 1.9565702867106 &   \\ \hline
       $K9_{435}$  & 7.2356793751 & $-3$, $3/4$     & 1.8634426716184 &   \\ \hline
        \hline
      \end{tabular}
       \vskip 0.06in
      \caption{ Knots in the
   SnapPy census of cusped hyperbolic 3-manifolds that share surgeries with $4_1$.} 
      \label{tab: qhyp knots8-9}
      \end{center}
    \end{table}

%%%%%%%%%%%%%%%%%%%%%%%%%%%%%%%%%%%%%%
%\bibliographystyle{}

\bibliographystyle{abbrv}
%\bibstyle{hamsalpha}

\bibliography{biblio}

\begin{thebibliography}{10}

\bibitem{AJLO2015}
T.~Abe, I.~D. Jong, J.~Luecke, and J.~Osoinach.
\newblock Infinitely many knots admitting the same integer surgery and a
  four-dimensional extension.
\newblock {\em Int. Math. Res. Not. IMRN}, 2015(22):11667--11693, 2015.

\bibitem{AJOT2013}
T.~Abe, I.~D. Jong, Y.~Omae, and M.~Takeuchi.
\newblock Annulus twist and diffeomorphic 4-manifolds.
\newblock {\em Math. Proc. Cambridge Philos. Soc.}, 155(2):219--235, 2013.

\bibitem{AbeTagami}
T.~Abe and K.~Tagami.
\newblock Knots with infinitely many non-characterizing slopes.
\newblock {\em Kodai Math. J.}, 44(3):395--421, 2021.

\bibitem{AMU}
J.~E. Andersen, G.~Masbaum, and K.~Ueno.
\newblock {Topological quantum field theory and the Nielsen-Thurston
  classification of M(0,4)}.
\newblock {\em {Math Proc. Cambridge Philos. Soc.}}, 141:{477--488}, {2006}.

\bibitem{growth6j}
G.~Belletti, R.~Detcherry, E.~Kalfagianni, and T.~Yang.
\newblock Growth of quantum {$6j$}-symbols and applications to the volume
  conjecture.
\newblock {\em J. Differential Geom.}, 120(2):199--229, 2022.

\bibitem{TVCompact}
R.~Benedetti and C.~Petronio.
\newblock On {R}oberts' proof of the {T}uraev-{W}alker theorem.
\newblock {\em J. Knot Theory Ramifications}, 5(4):427--439, 1996.

\bibitem{BHMVKauffman}
C.~Blanchet, N.~Habegger, G.~Masbaum, and P.~Vogel.
\newblock Topological quantum field theories derived from the {K}auffman
  bracket.
\newblock {\em Topology}, 34(4):883--927, 1995.

\bibitem{CallahanDeanWeeks}
P.~J. Callahan, J.~C. Dean, and J.~R. Weeks.
\newblock The simplest hyperbolic knots.
\newblock {\em J. Knot Theory Ramifications}, 8(3):279--297, 1999.

\bibitem{ChampanerkarKofmanMullen}
A.~Champanerkar, I.~Kofman, and T.~Mullen.
\newblock The 500 simplest hyperbolic knots.
\newblock {\em J. Knot Theory Ramifications}, 23(12):1450055, 34, 2014.

\bibitem{ChampanerkarKofmanPatterson}
A.~Champanerkar, I.~Kofman, and E.~Patterson.
\newblock The next simplest hyperbolic knots.
\newblock {\em J. Knot Theory Ramifications}, 13(7):965--987, 2004.

\bibitem{chenyang2018vol}
Q.~Chen and T.~Yang.
\newblock Volume conjectures for the {R}eshetikhin-{T}uraev and the
  {T}uraev-{V}iro invariants.
\newblock {\em Quantum Topol.}, 9(3):419--460, 2018.

\bibitem{SnapPy}
M.~Culler, N.~M. Dunfield, M.~Goerner, and J.~R. Weeks.
\newblock Snap{P}y, a computer program for studying the geometry and topology
  of $3$-manifolds.
\newblock Available at \url{http://snappy.computop.org} (DD/MM/YYYY).

\bibitem{2-cable}
R.~Detcherry.
\newblock Growth of {T}uraev-{V}iro invariants and cabling.
\newblock {\em J. Knot Theory Ramifications}, 28(14):1950041, 8, 2019.

\bibitem{DKAdvances}
R.~Detcherry and E.~Kalfagianni.
\newblock Quantum representations and monodromies of fibered links.
\newblock {\em Adv. Math.}, 351:676--701, 2019.

\bibitem{detcherryKalfagianni2019gromov}
R.~Detcherry and E.~Kalfagianni.
\newblock Gromov norm and {T}uraev-{V}iro invariants of 3-manifolds.
\newblock {\em Ann. Sci. \'{E}c. Norm. Sup\'{e}r. (4)}, 53(6):1363--1391, 2020.

\bibitem{DKIndiana}
R.~Detcherry and E.~Kalfagianni.
\newblock Cosets of monodromies and quantum representations.
\newblock {\em Indiana Univ. Math. J.}, 71(3):1101--1129, 2022.

\bibitem{colJvolDKY}
R.~Detcherry, E.~Kalfagianni, and T.~Yang.
\newblock Turaev-{V}iro invariants, colored {J}ones polynomials, and volume.
\newblock {\em Quantum Topol.}, 9(4):775--813, 2018.

\bibitem{FG}
D.~Futer and F.~Gu\'{e}ritaud.
\newblock Angled decompositions of arborescent link complements.
\newblock {\em Proc. Lond. Math. Soc. (3)}, 98(2):325--364, 2009.

\bibitem{FuterPurcellSchleimerCode}
D.~Futer, J.~S. Purcell, and S.~Schleimer.
\newblock Excluding cosmetic surgeries using hyperbolic geometry.
\newblock Code available at \url{http://github.com/saulsch/Cosmetic.git}.

\bibitem{FuterPurcellSchleimerPaper}
D.~Futer, J.~S. Purcell, and S.~Schleimer.
\newblock Excluding cosmetic surgeries on hyperbolic 3-manifolds, 2024.

\bibitem{GABAIKAZEZ}
D.~Gabai and W.~H. Kazez.
\newblock Pseudo-anosov maps and surgery on fibred 2-bridge knots.
\newblock {\em Topology and its Applications}, 37(1):93--100, 1990.

\bibitem{knotinfo}
C.~Livingston and A.~H. Moore.
\newblock Knotinfo: Table of knot invariants.
\newblock URL: \url{knotinfo.math.indiana.edu}, March 2023.

\bibitem{OhtsukiFig8}
T.~Ohtsuki.
\newblock On the asymptotic expansion of the quantum su(2) invariant at q
  =exp(4$\pi$/n) for closed hyperbolic 3–manifolds obtained by integral
  surgery along the figure-eight knot.
\newblock {\em Algebraic \& Geometric Topology}, 2018.

\bibitem{ReshetikhinTuraev1991}
N.~Reshetikhin and V.~G. Turaev.
\newblock Invariants of {$3$}-manifolds via link polynomials and quantum
  groups.
\newblock {\em Invent. Math.}, 103(3):547--597, 1991.

\bibitem{RobertsSkein}
J.~Roberts.
\newblock Skein theory and {T}uraev-{V}iro invariants.
\newblock {\em Topology}, 34(4):771--787, 1995.

\bibitem{Rolfsen}
D.~Rolfsen.
\newblock {\em Knots and links}.
\newblock Mathematics Lecture Series, No. 7. Publish or Perish, Inc., Berkeley,
  Calif., 1976.

\bibitem{rolfsen2003knots}
D.~Rolfsen.
\newblock {\em Knots and Links}.
\newblock AMS Chelsea Publishing Series. AMS Chelsea Pub., 2003.

\bibitem{ThurstonGT3manifolds}
W.~P. Thurston.
\newblock {\em The geometry and topology of three-manifolds}.
\newblock Princeton University Math Department Notes, 1979.

\bibitem{TuraevBook}
V.~G. Turaev.
\newblock {\em Quantum invariants of knots and 3-manifolds}, volume~18 of {\em
  De Gruyter Studies in Mathematics}.
\newblock Walter de Gruyter \& Co., Berlin, 1994.

\bibitem{wongYangF8}
K.~H. Wong and T.~Yang.
\newblock On the volume conjecture for hyperbolic {D}ehn-filled $3$-manifolds
  along the figure-eight knot, 2022.

\end{thebibliography}

\end{document}